\begin{document}

\title{Fully Adaptive Zeroth-Order Method for Minimizing Functions with Compressible Gradients\thanks{G. N. Grapiglia was partially supported by FRS-FNRS, Belgium (Grant CDR J.0081.23).}
}
% The thanks line in the title should be filled in if there is
% any support acknowledgement for the overall work to be included
% This \thanks is also used for the received by date info, but
% authors are not expected to provide this.

\titlerunning{ZORO-FA}        % if too long for running head

\author{Geovani N. Grapiglia \and Daniel McKenzie}

           \institute{G.N. Grapiglia \at
              Université catholique de Louvain, ICTEAM/INMA, Avenue Georges Lema\^{i}tre, 4-6/ L4.05.01, B-1348, Louvain-la-Neuve, Belgium\\               geovani.grapiglia@uclouvain.be.
}

\institute{D. McKenzie \at 
Colorado School of Mines, Applied Mathematics and Statistics Department, Colorado, USA\\ dmckenzie@mines.edu.
}
\date{Received: date / Accepted: date}
% The correct dates will be entered by the editor

\maketitle

\begin{abstract} We propose an adaptive zeroth-order method for minimizing differentiable functions with $L$-Lipschitz continuous gradients. The method is designed to take advantage of the eventual compressibility of the gradient of the objective function, but it does not require knowledge of the approximate sparsity level $s$ or the Lipschitz constant $L$ of the gradient. We show that the new method performs no more than $\mathcal{O}\left(n^{2}\epsilon^{-2}\right)$ function evaluations to find an $\epsilon$-approximate stationary point of an objective function with $n$ variables. Assuming additionally that the gradients of the objective function are compressible, we obtain an improved complexity bound of $\mathcal{O}\left(s\log\left(n\right)\epsilon^{-2}\right)$ function evaluations, which holds with high probability. Preliminary numerical results illustrate the efficiency of the proposed method and demonstrate that it can significantly outperform its non-adaptive counterpart.

\keywords{derivative-free optimization \and black-box optimization\and zeroth-order optimization \and worst-case complexity\quad compressible gradients}

\end{abstract}

\section{Introduction}
In this work we consider the unconstrained minimization of a possibly nonconvex differentiable function $f:\mathbb{R}^{n}\to\mathbb{R}$. Most well-known iterative algorithms for this type of problem are {\em gradient-based}, meaning at each iteration they require at least one evaluation of the gradient of $f(\,\cdot\,)$. However, in many problems, it is impossible to compute the gradient of the underlying objective function. Examples include simulation-based optimization \cite{Nakamura2017}, reinforcement learning \cite{salimans2017evolution,mania2018simple,choromanski2020provably} and hyperparameter tuning \cite{bergstra2012random,tett2022calibrating}. These settings create a demand for {\em zeroth-order optimization algorithms}, which are algorithms that rely only on evaluations of the objective function $f(\,\cdot\,)$. Recently, problems requiring zeroth-order methods have emerged in which $n$, the dimension of the problem, is of order $10^5$ or greater. In particular, we highlight many recent works applying zeroth-order methods to fine-tune Large Language Models (LLMs) \cite{malladi2024fine,zhang2024revisiting,liu2024sparse,chen2023deepzero}. For problems of this scale, classical zeroth-order algorithms can struggle to make progress as they generically require $\mathcal{O}(n)$ function evaluations (``queries'') per iteration. \textcolor{black}{For problems of moderate to large scale, deterministic zeroth-order algorithms often struggle to make progress, as they typically require $\mathcal{O}(n)$ function evaluations (or ``queries'') per iteration. To mitigate this issue, many randomized zeroth-order methods have been proposed in recent years, with cheaper per-iteration cost in terms of function evaluations. Some of these methods mimic classical zeroth-order methods, but applied in a low-dimensional subspace \cite{Nesterov,Bergou,Kozak,Cartis,Roberts}. Others aim to exploit `low-dimensional' structure within $f$ \cite{Bandeira,cartis2022dimensionality,cartis2023bound,wang2016bayesian,Yue2023zeroth}.}

Here, we focus on methods that exploit \textcolor{black}{a certain kind of low dimensional structure, namely} (approximate) gradient sparsity \cite{wang2018stochastic,balasubramanian2018zeroth,cai2021zeroth,cai2022one,cai2022zeroth,qiu2024gradient}, particularly the {\tt ZORO}-type algorithms \cite{cai2021zeroth,cai2022zeroth} \textcolor{black}{which robustly approximate $\nabla f(x)$ using tools from compressed sensing}\footnote{\textcolor{black}{In \cite{Bandeira}, compressed sensing techniques have been used to exploit (approximate) sparsity of $\nabla^{2}f(x)$ in the context of derivative-free trust-region methods.}}. One adverse consequence of this fusion of optimization and signal processing techniques is that {\tt ZORO}-type algorithms have an unusually large number of hyperparameters. In this paper, we address this issue by proposing a {\em fully adaptive} version of {\tt ZORO}, which we call {\tt ZORO-FA}. The proposed method is designed to take advantage of the eventual compressibility of the gradient of the objective function, in which case a suitable gradient approximation can be obtained with high probability using $\mathcal{O}\left(s\log(n)\right)$ function evaluations. Specifically, at each iteration, the new method first attempts to use these cheap gradient approximations, and only when they fail \textcolor{black}{a forward-finite difference gradient is computed} using $\mathcal{O}\left(n\right)$ function evaluations. Moreover, {\tt ZORO-FA} is endowed with a novel line-search procedure for selecting multiple hyperparameters at once, adapted from that proposed in \cite{grapiglia2024quadratic}. We show that the new method takes at most $\mathcal{O}\left(n^{2}\epsilon^{-2}\right)$ function evaluations to find $\epsilon$-approximate stationary points. Assuming that the gradients of the objective function are compressible with sparsity level $s$, we obtain an improved complexity bound of $\mathcal{O}\left(s\log\left(n\right)\epsilon^{-2}\right)$ function evaluations, which holds with high probability. Our preliminary numerical results indicate that {\tt ZORO-FA} requires significantly fewer function evaluations than {\tt ZORO} to find points with similar function values, and that the improvement factor increases as the dimensionality of the problems grows.\\

\noindent\textbf{Contents.} The rest of the paper is laid out as follows. In Section 2 we present the key auxiliary results about our gradient approximation and the corresponding variant of the descent lemma. In Section 3, we present the new zeroth-order method and establish a worst-case oracle complexity bound. Finally, in Section 4 we report some numerical results.
 
\section{Auxiliary Results}

In what follows we will consider the following assumptions:

\begin{itemize}
\item[\textbf{A1}] $\nabla f:\mathbb{R}^{n}\to\mathbb{R}^{n}$ is $L$-Lipschitz continuous.
\item[\textbf{A2}] $f$ is bounded below, that is there exists $f_{\mathrm{low}}\in\mathbb{R}$ such that 
\begin{equation*}
    f(x) \geq f_{\mathrm{low}} \quad \text{ for all } x \in \mathbb{R}^n.
\end{equation*}
\item[\textbf{A3}] There exists $p\in (0,1)$ such that, for all $x\in\mathbb{R}^{n}$, $\nabla f(x)$ is $p$-compressible with magnitude $\|\nabla f(x)\|$, i.e., 
\begin{equation*}
\left|\nabla f(x)\right|_{(j)}\leq\dfrac{\|\nabla f(x)\|}{j^{1/p}},\quad j=1,\ldots,n,
\end{equation*}
where $|u|_{(j)}$ denotes the \textcolor{black}{$j$-th} largest-in-magnitude component of $u\in\mathbb{R}^{n}$.
\end{itemize}

 \noindent Given $x\in\mathbb{R}^{n}$ and $s\in\left\{1,\ldots,n\right\}$, let 
 \begin{equation*}
\left[\nabla f(x)\right]_{(s)}=\arg\min\left\{\|v-\nabla f(x)\|_{2}\,:\,\|v\|_{0}\leq s\right\},
 \end{equation*}
 where $\|v\|_0$ denotes the cardinality of the support of $v$. Note that $\left[\nabla f(x)\right]_{(s)}$ is the $s$-sparse vector formed by the $s$ largest (in mo\-du\-lus) entries of $\nabla f(x)$.

 \begin{lemma}
Suppose that A3 holds and let $s\in\left\{1,\ldots,n\right\}$. Then, for all $x\in\mathbb{R}^{n}$ we have
\begin{equation*}
\|\nabla f(x)-\left[\nabla f(x)\right]_{(s)}\|_{1}\leq\dfrac{\|\nabla f(x)\|_{2}}{\left(\frac{1}{p}-1\right)}s^{1-1/p},
\end{equation*}
and
\begin{equation*}
\|\nabla f(x)-\left[\nabla f(x)\right]_{(s)}\|_{2}\leq\dfrac{\|\nabla f(x)\|_{2}}{\left(\frac{2}{p}-1\right)^{\frac{1}{2}}}s^{1/2-1/p}.
\end{equation*}
 \label{lem:X}
 \end{lemma}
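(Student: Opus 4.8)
The plan is to bound the tail of the sorted gradient entries using the compressibility assumption A3 and compare the resulting sum to an integral. Write $g = \nabla f(x)$ and $M = \|g\|_2$. By definition, $[\nabla f(x)]_{(s)}$ retains the $s$ largest-in-magnitude entries of $g$, so the residual $g - [\nabla f(x)]_{(s)}$ consists precisely of the entries ranked $s+1, s+2, \ldots, n$ in magnitude. Hence
\begin{equation*}
\|g - [\nabla f(x)]_{(s)}\|_1 = \sum_{j=s+1}^{n} |g|_{(j)} \leq M \sum_{j=s+1}^{n} j^{-1/p},
\end{equation*}
where the inequality is A3 applied termwise. For the $\ell_2$ estimate the same reasoning gives $\|g - [\nabla f(x)]_{(s)}\|_2^2 = \sum_{j=s+1}^{n} |g|_{(j)}^2 \leq M^2 \sum_{j=s+1}^{n} j^{-2/p}$.

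Next I would replace each sum by an integral. Since $t \mapsto t^{-1/p}$ is decreasing on $(0,\infty)$ (note $1/p > 1$ because $p \in (0,1)$), we have $j^{-1/p} \leq \int_{j-1}^{j} t^{-1/p}\,dt$, so
\begin{equation*}
\sum_{j=s+1}^{n} j^{-1/p} \leq \int_{s}^{n} t^{-1/p}\,dt \leq \int_{s}^{\infty} t^{-1/p}\,dt = \frac{s^{1-1/p}}{\tfrac{1}{p} - 1},
\end{equation*}
the last step using $1/p - 1 > 0$ so the improper integral converges. Substituting back yields the first claimed bound. For the second, the same comparison with $t \mapsto t^{-2/p}$ (decreasing, and $2/p - 1 > 1 > 0$) gives $\sum_{j=s+1}^{n} j^{-2/p} \leq \int_s^\infty t^{-2/p}\,dt = s^{1-2/p}/(\tfrac{2}{p}-1)$, hence $\|g - [\nabla f(x)]_{(s)}\|_2^2 \leq M^2 s^{1-2/p}/(\tfrac{2}{p}-1)$; taking square roots and noting $\tfrac12(1 - 2/p) = \tfrac12 - 1/p$ gives the stated inequality.

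The only genuinely delicate point is making sure the integral comparison is set up so the exponents and constants come out exactly as stated — in particular that one integrates from $s$ (not $s+1$) to get the clean constant $1/(1/p - 1)$, which is justified because the $j$-th term is dominated by the integral over $[j-1,j]$ and the sum starts at $j = s+1$. Everything else is routine: termwise application of A3, monotonicity of power functions, and evaluation of an elementary improper integral. There is no convexity, probability, or optimization content here — it is purely a deterministic tail estimate for compressible vectors.
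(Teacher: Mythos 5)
Your proof is correct, and it is essentially the standard tail estimate that the paper itself does not reproduce but instead defers to Section~2.5 of the cited CoSaMP reference of Needell and Tropp; the argument there is the same termwise application of the compressibility bound followed by an integral comparison. Nothing is missing: the residual identification, the monotonicity justification for $j^{-1/p}\leq\int_{j-1}^{j}t^{-1/p}\,dt$ with $j\geq s+1\geq 2$, and the evaluation of the improper integrals all check out and reproduce the stated constants exactly.
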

 \begin{proof}
See Section 2.5 in \cite{needell2009cosamp}.\qed.
 \end{proof}

\noindent We say that a matrix $Z\in\mathbb{R}^{m\times n}$ has the $4s$-Restricted Isometry Property ($4s$-RIP) when, for every $v\in\mathbb{R}^{n}$ with $\|v\|_{0}\leq 4s$ we have
\begin{equation*}
\left(1-\delta_{4s}(Z)\right)\|v\|_{2}^{2}\leq\|Zv\|_{2}^{2}\leq \left(1+\delta_{4s}(Z)\right)\|v\|_{2}^{2}
\end{equation*}
for some constant $\delta_{4s}(Z)\in (0,1)$. 

\begin{lemma}(Proposition 3.5 in \cite{needell2009cosamp}) If $Z\in\mathbb{R}^{m\times n}$ satisfies the $4s$-RIP for some $s\in\left\{1,\ldots,n\right\}$, then for any $v\in\mathbb{R}^{n}$ we have
\begin{equation*}
\|Zv\|_{2}\leq\sqrt{1+\delta_{4s}(Z)}\left(\|v\|_{2}+\frac{1}{\sqrt{s}}\|v\|_{1}\right).
\end{equation*}
\label{lem:Y}
\end{lemma}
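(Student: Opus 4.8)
The plan is to decompose $v$ into a sum of (at most) $s$-sparse pieces supported on disjoint blocks of coordinates ordered by decreasing magnitude, apply the upper RIP bound to each block, and then control the resulting sum of $\ell_{2}$-norms by $\|v\|_{1}$ via the standard ``sorted tail'' estimate. Note first that both sides of the inequality, as well as the hypothesis that $Z$ satisfies the $4s$-RIP, are invariant under permuting the coordinates of $\mathbb{R}^{n}$ (replacing $Z$ by $ZP$ for a permutation matrix $P$ leaves $\delta_{4s}$ unchanged, since $P$ maps $4s$-sparse vectors to $4s$-sparse vectors and preserves $\|\cdot\|_{2}$). Hence I may assume $|v_{1}|\geq|v_{2}|\geq\cdots\geq|v_{n}|$. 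Partition $\{1,\ldots,n\}$ into the consecutive blocks $T_{0}=\{1,\ldots,s\}$, $T_{1}=\{s+1,\ldots,2s\}$, $T_{2}=\{2s+1,\ldots,3s\}$, and so on (the last block possibly smaller), and let $v_{T_{j}}$ denote the vector that agrees with $v$ on $T_{j}$ and vanishes elsewhere, so that $v=\sum_{j}v_{T_{j}}$ and $\|v_{T_{j}}\|_{0}\leq s\leq 4s$ for every $j$.

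Second, by the triangle inequality and the upper $4s$-RIP bound applied to each block,
\[
\|Zv\|_{2}\leq\sum_{j}\|Zv_{T_{j}}\|_{2}\leq\sqrt{1+\delta_{4s}(Z)}\,\sum_{j}\|v_{T_{j}}\|_{2}=\sqrt{1+\delta_{4s}(Z)}\left(\|v_{T_{0}}\|_{2}+\sum_{j\geq 1}\|v_{T_{j}}\|_{2}\right).
\]
Since $T_{0}$ is a subset of the coordinates, $\|v_{T_{0}}\|_{2}\leq\|v\|_{2}$, so it only remains to show $\sum_{j\geq 1}\|v_{T_{j}}\|_{2}\leq s^{-1/2}\|v\|_{1}$. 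For this, I use the ordering: for $j\geq 1$, every entry of $v_{T_{j}}$ has magnitude at most $\min_{i\in T_{j-1}}|v_{i}|\leq \tfrac{1}{s}\|v_{T_{j-1}}\|_{1}$, hence $\|v_{T_{j}}\|_{2}\leq\sqrt{|T_{j}|}\,\|v_{T_{j}}\|_{\infty}\leq\sqrt{s}\cdot\tfrac{1}{s}\|v_{T_{j-1}}\|_{1}=s^{-1/2}\|v_{T_{j-1}}\|_{1}$. Summing over $j\geq 1$ and using that the blocks partition the coordinates,
\[
\sum_{j\geq 1}\|v_{T_{j}}\|_{2}\leq s^{-1/2}\sum_{j\geq 1}\|v_{T_{j-1}}\|_{1}\leq s^{-1/2}\sum_{j\geq 0}\|v_{T_{j}}\|_{1}=s^{-1/2}\|v\|_{1}.
\]
Combining the three displays yields $\|Zv\|_{2}\leq\sqrt{1+\delta_{4s}(Z)}\bigl(\|v\|_{2}+s^{-1/2}\|v\|_{1}\bigr)$, as claimed.

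I do not anticipate any genuine obstacle. The only steps needing a little care are the bookkeeping in the sorted-tail estimate—namely that $\|v_{T_{j}}\|_{\infty}$ is dominated by the $\ell_{1}$-average of the preceding block $T_{j-1}$, which is where the $s^{-1/2}$ (rather than, say, $(4s)^{-1/2}$) constant comes from—and the observation that applying the $4s$-RIP bound to each block is legitimate because each block has at most $s\leq 4s$ nonzeros, so the full strength of the $4s$-RIP is not even needed here.
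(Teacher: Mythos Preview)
Your proof is correct and is precisely the standard block-decomposition argument used to establish this inequality (sort coordinates, partition into blocks of size $s$, apply the upper RIP bound blockwise, and control the tail via $\|v_{T_{j}}\|_{\infty}\leq s^{-1}\|v_{T_{j-1}}\|_{1}$). The paper itself does not supply a proof but simply cites Proposition~3.5 of Needell--Tropp, whose argument is exactly the one you have written; so there is nothing to compare.
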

Consider the function $c_{0}:\mathbb{R}\to\mathbb{R}$ given by
\begin{equation}
c_{0}(a)=\dfrac{a^{2}}{4}-\frac{a^{3}}{6}.
\label{eq:auxiliar}
\end{equation}

\begin{lemma} (Theorem 5.2 in \cite{baraniuk2008simple}) Given $s\in\mathbb{N}$ such that $m:=\lceil b s\ln(n)\rceil<n$, let $Z\in\mathbb{R}^{m\times n}$ be defined as
\begin{equation*}
Z=\frac{1}{\sqrt{m}}\left[z_{1}\ldots z_{m}\right]^{T},
\end{equation*}
where $z_{i}\in\mathbb{R}^{n}$ are Rademacher random vectors\footnote{That is, $[z_i]_j = \pm 1$ with equal probability.}. Given $\delta\in (0,1)$, if 
\begin{equation}
b>\textcolor{black}{c_{1}(\delta)\equiv}\dfrac{4\left(1+\dfrac{1+\ln(\frac{12}{\delta})}{\ln(\frac{n}{4s})}\right)}{c_{0}(\frac{\delta}{2})},
\label{eq:revision1}
\end{equation}
for $c_{0}(\,\cdot\,)$ defined in (\ref{eq:auxiliar}), then $Z$ satisfies the $4s$-RIP for $\delta_{4s}(Z)=\delta$, with probability $\geq 1-2e^{\textcolor{black}{-\gamma(\delta)} m}$, where 
\begin{equation}
    \textcolor{black}{\gamma(\delta)\equiv}\left[c_{0}\left(\frac{\delta}{2}\right)-\frac{4}{b}\left(1+\dfrac{1+\ln(\frac{12}{\delta})}{\ln(\frac{n}{4s})}\right)\right].
    \label{eq:revision2}
\end{equation}
\label{lem:new}    
\end{lemma}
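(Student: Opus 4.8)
\medskip\noindent\textbf{Proof proposal.}\ The plan is to run the classical ``concentration of measure plus covering argument'' for random matrices enjoying the Johnson--Lindenstrauss concentration property, of which the Rademacher matrix $Z$ is an instance. The first ingredient is a concentration bound for a single fixed vector: for every fixed $v\in\mathbb{R}^{n}$ and every $\delta'\in(0,1)$,
\begin{equation*}
\mathbb{P}\Big[\,\big|\,\|Zv\|_{2}^{2}-\|v\|_{2}^{2}\,\big|\geq \delta'\|v\|_{2}^{2}\,\Big]\leq 2\,e^{-m\,c_{0}(\delta')},
\end{equation*}
with $c_{0}$ as in \eqref{eq:auxiliar}. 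This is a Chernoff/moment-generating-function computation: since $m\|Zv\|_{2}^{2}=\sum_{i=1}^{m}\langle z_{i},v\rangle^{2}$ is a sum of independent terms, each a square of a Rademacher-weighted sum with mean $\|v\|_{2}^{2}$, bounding the log-MGF of $\langle z_{i},v\rangle^{2}-\|v\|_{2}^{2}$ and optimizing over the Chernoff parameter yields exactly the polynomial $c_{0}(\delta')=\delta'^{2}/4-\delta'^{3}/6$.

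Next comes the covering step. Fix a support set $T\subseteq\{1,\dots,n\}$ with $|T|=4s$ and let $S_{T}$ be the unit sphere of the coordinate subspace $X_{T}$ of vectors supported on $T$. For a net radius $\rho\in(0,1)$ to be fixed later, choose a $\rho$-net $\mathcal{N}_{T}$ of $S_{T}$ with $|\mathcal{N}_{T}|\leq(1+2/\rho)^{4s}$ by a standard volumetric bound. Applying the concentration inequality with $\delta'=\delta/2$ at each point of $\mathcal{N}_{T}$ and taking a union bound shows that, outside an event of probability at most $2(1+2/\rho)^{4s}e^{-m\,c_{0}(\delta/2)}$, one has $\big|\,\|Zq\|_{2}^{2}-1\,\big|\leq\delta/2$ for all $q\in\mathcal{N}_{T}$. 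A successive-approximation argument then transfers this from the net to all of $S_{T}$: letting $A=\sup_{v\in S_{T}}\|Zv\|_{2}$, for any $v\in S_{T}$ and the nearest $q\in\mathcal{N}_{T}$ we get $\|Zv\|_{2}\leq\|Zq\|_{2}+\|Z(v-q)\|_{2}\leq\sqrt{1+\delta/2}+A\rho$, hence $A\leq\sqrt{1+\delta/2}\,/(1-\rho)$, and a matching lower bound follows similarly. Taking $\rho$ of order $\delta$ (so that $1+2/\rho\leq 12/\delta$) makes the two-sided estimate collapse to $(1-\delta)\|v\|_{2}^{2}\leq\|Zv\|_{2}^{2}\leq(1+\delta)\|v\|_{2}^{2}$ on $X_{T}$; this is precisely what produces the constant $12/\delta$ appearing in \eqref{eq:revision1}.

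Finally I would union-bound over the $\binom{n}{4s}\leq(en/4s)^{4s}$ possible supports $T$, concluding that $Z$ satisfies the $4s$-RIP with $\delta_{4s}(Z)=\delta$ except with probability at most
\begin{equation*}
2\binom{n}{4s}(12/\delta)^{4s}e^{-m\,c_{0}(\delta/2)}\leq 2\exp\!\Big(4s\ln\!\big(\tfrac{n}{4s}\big)\Big(1+\tfrac{1+\ln(12/\delta)}{\ln(n/4s)}\Big)-m\,c_{0}(\tfrac{\delta}{2})\Big),
\end{equation*}
where I used $\ln(en/4s)=1+\ln(n/4s)$. Since $m=\lceil bs\ln n\rceil\geq bs\ln n\geq bs\ln(n/4s)$, the first term in the exponent is bounded by $\tfrac{4}{b}\big(1+\tfrac{1+\ln(12/\delta)}{\ln(n/4s)}\big)m$, so the whole exponent is at most $-\gamma(\delta)m$ with $\gamma(\delta)$ as in \eqref{eq:revision2}; condition \eqref{eq:revision1} is exactly the requirement $\gamma(\delta)>0$, giving the claimed bound $1-2e^{-\gamma(\delta)m}$.

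I expect the main obstacle to be the bookkeeping in the last two paragraphs: pinning down the net radius $\rho$ and the split between ``RIP constant $\delta/2$ on the net'' and ``RIP constant $\delta$ on the subspace'' so that exactly the quantities $12/\delta$ and $c_{0}(\delta/2)$ emerge, and then arranging the union-bound exponent so it compresses into the clean form $-\gamma(\delta)m$ with $\gamma(\delta)$ as displayed. Establishing the fixed-vector concentration inequality with the precise polynomial $c_{0}$ is routine but also demands some care in the Chernoff optimization; everything else (covering numbers, the $\binom{n}{4s}$ bound, and handling the ceiling in $m$) is standard.
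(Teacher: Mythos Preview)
The paper does not supply its own proof of this lemma: it is quoted verbatim as Theorem~5.2 of Baraniuk, Davenport, DeVore and Wakin (2008) and left uncited beyond that reference. Your proposal is precisely the argument of that paper --- Johnson--Lindenstrauss-type concentration for a fixed vector with exponent $c_{0}(\delta/2)$, a $\rho$-net of each $4s$-dimensional coordinate sphere (there $\rho=\delta/4$, giving the covering number $(12/\delta)^{4s}$), extension from net to sphere, and a union bound over $\binom{n}{4s}\le (en/4s)^{4s}$ supports --- and your final exponent manipulation correctly recovers $\gamma(\delta)$ and the condition $b>c_{1}(\delta)$.
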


\subsection{Gradient Approximation}
{\tt ZORO} \cite{cai2022zeroth} uses finite differences along random directions to approximate directional derivatives, thought of as linear measurements of $\nabla f(x)$. More precisely, let $\left\{z_{i}\right\}_{i=1}^{m}\subset\mathbb{R}^{n}$ be Rademacher random vectors. For fixed $h > 0$ define the vector $y\in\mathbb{R}^{m}$ with 
\begin{equation}
y_{i}=\dfrac{f(x+h z_{i})-f(x)}{h\sqrt{m}},\quad j=i,\ldots,m.
\label{eq:define_y_vec}
\end{equation}
and the matrix 
\begin{equation}
Z=\dfrac{1}{\sqrt{m}}\left[z_{1}\ldots z_{m}\right]^{T}\in\mathbb{R}^{m\times n}.
\label{eq:define_Z}
\end{equation}

\begin{lemma}
\label{lem:y_error}
Suppose that A1 holds and let $y$ and $Z$ be defined by (\ref{eq:define_y_vec}) and (\ref{eq:define_Z}), respectively. Then
\begin{equation}
y=Z\nabla f(x)+h w,
\label{eq:y_inverse_problem}
\end{equation}
\textcolor{black}{where $w\in\mathbb{R}^{m}$ with} $\|w\|\leq\frac{Ln}{2}$.
\end{lemma}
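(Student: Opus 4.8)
The plan is to derive the identity (\ref{eq:y_inverse_problem}) directly from the definitions of $y$ and $Z$, reading off $w$ as a rescaled second-order Taylor remainder, and then to bound $\|w\|$ using the standard Lipschitz-gradient inequality together with the fact that Rademacher vectors have fixed Euclidean norm $\sqrt{n}$.

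First I would recall that A1 implies, for all $u,v\in\mathbb{R}^{n}$,
\[
\left|f(v)-f(u)-\langle\nabla f(u),v-u\rangle\right|\leq\frac{L}{2}\|v-u\|^{2}.
\]
Taking $u=x$ and $v=x+hz_{i}$, and using that each $z_{i}$ has entries $\pm 1$ so that $\|z_{i}\|^{2}=n$, this yields
\[
\left|f(x+hz_{i})-f(x)-h\langle\nabla f(x),z_{i}\rangle\right|\leq\frac{Lh^{2}n}{2},\qquad i=1,\ldots,m.
\]

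Next, define $w\in\mathbb{R}^{m}$ componentwise by $w_{i}:=\tfrac{1}{h}\bigl(y_{i}-(Z\nabla f(x))_{i}\bigr)$, so that $y=Z\nabla f(x)+hw$ holds by construction; it then remains only to bound $\|w\|$. From (\ref{eq:define_Z}) we have $(Z\nabla f(x))_{i}=\tfrac{1}{\sqrt{m}}\langle\nabla f(x),z_{i}\rangle$, and substituting the definition (\ref{eq:define_y_vec}) of $y_{i}$ gives
\[
w_{i}=\frac{f(x+hz_{i})-f(x)-h\langle\nabla f(x),z_{i}\rangle}{h^{2}\sqrt{m}}.
\]
The displayed inequality above then gives $|w_{i}|\leq\frac{Ln}{2\sqrt{m}}$ for each $i$, and hence
\[
\|w\|=\left(\sum_{i=1}^{m}w_{i}^{2}\right)^{1/2}\leq\left(m\cdot\frac{L^{2}n^{2}}{4m}\right)^{1/2}=\frac{Ln}{2}.
\]

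I do not expect a genuine obstacle: the argument is essentially bookkeeping. The only points requiring care are propagating the two factors of $1/\sqrt{m}$ and the factor $1/h$ correctly through the definitions, and noticing the cancellation of $h^{2}$ between the Taylor remainder bound and the denominator of $w_{i}$ — which is exactly what makes the final bound on $\|w\|$ independent of the finite-difference parameter $h$.
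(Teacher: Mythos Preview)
Your proof is correct and follows essentially the same approach as the paper: both use the Lipschitz-gradient Taylor bound with $\|z_i\|^2=n$ to control each component of the remainder, obtain $|w_i|\leq \frac{Ln}{2\sqrt{m}}$, and sum to get $\|w\|\leq \frac{Ln}{2}$. The only cosmetic difference is that you define $w$ explicitly up front and then bound it, whereas the paper first derives the componentwise bound and then identifies $w$.
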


\begin{proof}
From A1,
\begin{align*}
    \left|f(x+h z_{i})-f(x)-\nabla f(x)^{T}(h z_{i})\right| &\leq \frac{h^2}{2}L\|z_i\|_2^2 \\
    & = \frac{Lh^2}{2}n \quad \text{ as $[z_i]_j = \pm 1$ for all $j$}
\end{align*}
Then
\begin{equation*}
\left|\frac{1}{\sqrt{m}}\dfrac{f(x+h z_{i})-f(x)}{h}-\frac{1}{\sqrt{m}}\nabla f(x)^{T}z_{i}\right|\leq\frac{Ln}{2\sqrt{m}}h.
\end{equation*}
Now, writing $y$ as in \eqref{eq:y_inverse_problem}, it follows from \eqref{eq:define_Z} that
\begin{equation*}
h |\left[w\right]_{i}|=|\left[y\right]_{i}-\left[Z\nabla f(x)\right]_{i}|\leq\frac{Ln}{2\sqrt{m}}h.
\end{equation*}
Therefore, 
\begin{equation*}
|\left[w\right]_{i}|\leq\dfrac{Ln}{2\sqrt{m}},\quad i=1,\ldots,m,
\end{equation*}
which implies that $\|w\|_{2}\leq\frac{Ln}{2}$.\qed
\end{proof}

We view \eqref{eq:y_inverse_problem} as a {\em linear inverse problem} to be solved for $\nabla f(x)$. As in \cite{cai2022zeroth}, given $s\in\mathbb{N}\setminus\left\{0\right\}$, we will compute an approximation to $\nabla f(x)$ by approximately solving the problem
\begin{equation}
 \min_{v\in\mathbb{R}^{n}}  \|Zv-y\| \quad \text{s.t.} \quad                              \|v\|_{0}\leq s,
\label{eq:sparse_recovery}
\end{equation}
using the algorithm CoSaMP proposed in \cite{needell2009cosamp}. 

\begin{lemma}
\label{lem:2.2}
Suppose that A1 and A3 hold. Given $x\in\mathbb{R}^{n}$, a set $\left\{z_{i}\right\}_{i=1}^{m}\subset\mathbb{R}^{n}$ of Rademacher random vectors, $s\in\left\{1,\ldots,n\right\}$ and $h>0$, let $y\in\mathbb{R}^{m}$ and $Z\in\mathbb{R}^{m\times n}$ be defined by (\ref{eq:define_y_vec}) and (\ref{eq:define_Z}), respectively. Denote by $\left\{v^{\ell}\right\}_{\ell\geq 0}$ the sequence generated by applying CoSaMP to the corresponding problem (\ref{eq:sparse_recovery}), with starting point $v^{0}=0$. If $Z$ satisfies the $4s$-RIP for a constant $\delta_{4s}(Z)<0.22665$, then for every $\ell\geq 0$,
\begin{equation}
\|v^{\ell}-\nabla f(x)\|_{2}\leq \left[\left(\frac{13.2}{\left(\frac{2}{p}-1\right)^{\frac{1}{2}}}+\frac{11}{\left(\frac{1}{p}-1\right)}\right)s^{-\frac{2-p}{2p}}+\left(\frac{1}{2}\right)^{\ell}\right]\|\nabla f(x)\|_{2}+\dfrac{11 Ln}{2}h
\label{eq:2.5}
\end{equation} 

% \begin{equation}
% \gamma=\textcolor{red}{???}
% \label{eq:2.6}
% \end{equation}
\end{lemma}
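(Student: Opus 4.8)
The plan is to read off \eqref{eq:2.5} from the convergence theory of CoSaMP in \cite{needell2009cosamp}, applied to the linear inverse problem \eqref{eq:y_inverse_problem}. First I would set up the correspondence: regard $\nabla f(x)$ as the signal to be recovered, $Z$ as the sensing matrix, and $hw$ as additive measurement noise; by Lemma~\ref{lem:y_error} this noise satisfies $\|hw\|_{2}\leq\tfrac{Ln}{2}h$, while by construction $[\nabla f(x)]_{(s)}$ is a best $s$-term approximation of $\nabla f(x)$. The hypothesis $\delta_{4s}(Z)<0.22665$ is exactly the restricted isometry condition under which CoSaMP contracts the recovery error by the factor $\tfrac12$ at each iteration.

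The heart of the argument is the per-iteration estimate. Writing $y=Z[\nabla f(x)]_{(s)}+\bigl(Z(\nabla f(x)-[\nabla f(x)]_{(s)})+hw\bigr)$ and bounding the image of the compressible tail with Lemma~\ref{lem:Y}, the ``effective noise'' has $\ell_{2}$-norm at most an absolute constant times $\|\nabla f(x)-[\nabla f(x)]_{(s)}\|_{2}+\tfrac{1}{\sqrt{s}}\|\nabla f(x)-[\nabla f(x)]_{(s)}\|_{1}+h\|w\|_{2}$. Feeding this into the iteration invariant of \cite{needell2009cosamp} (halving of the approximation error together with accumulation of the ``unrecoverable energy''), and adding back $\|\nabla f(x)-[\nabla f(x)]_{(s)}\|_{2}$ by the triangle inequality, one obtains, with $v^{0}=0$,
\[
\|v^{\ell+1}-\nabla f(x)\|_{2}\leq\tfrac12\|v^{\ell}-\nabla f(x)\|_{2}+c_{2}\|\nabla f(x)-[\nabla f(x)]_{(s)}\|_{2}+c_{3}\Bigl(\tfrac{1}{\sqrt{s}}\|\nabla f(x)-[\nabla f(x)]_{(s)}\|_{1}+h\|w\|_{2}\Bigr)
\]
for absolute constants $c_{2},c_{3}$. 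I expect this to be the main obstacle: the qualitative CoSaMP theorem does not suffice, so one must re-run the quantitative inequalities of \cite{needell2009cosamp} while tracking the dependence on $\delta_{4s}(Z)$, both to confirm that $\delta_{4s}(Z)<0.22665$ is what a contraction factor $\tfrac12$ requires and to pin down $c_{2}=6.6$ and $c_{3}=5.5$ (so that $2c_{2}=13.2$ and $2c_{3}=11$).

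Granting the recursion, the rest is routine bookkeeping. Since $v^{0}=0$ we have $\|v^{0}-\nabla f(x)\|_{2}=\|\nabla f(x)\|_{2}$, and unrolling the recursion while summing the geometric series $\sum_{j\geq0}2^{-j}=2$ gives
\[
\|v^{\ell}-\nabla f(x)\|_{2}\leq\Bigl(\tfrac12\Bigr)^{\ell}\|\nabla f(x)\|_{2}+2c_{2}\|\nabla f(x)-[\nabla f(x)]_{(s)}\|_{2}+2c_{3}\Bigl(\tfrac{1}{\sqrt{s}}\|\nabla f(x)-[\nabla f(x)]_{(s)}\|_{1}+h\|w\|_{2}\Bigr).
\]
Finally I would substitute the compressibility bounds of Lemma~\ref{lem:X}, namely $\|\nabla f(x)-[\nabla f(x)]_{(s)}\|_{2}\leq(\tfrac{2}{p}-1)^{-1/2}s^{1/2-1/p}\|\nabla f(x)\|_{2}$ and $\tfrac{1}{\sqrt{s}}\|\nabla f(x)-[\nabla f(x)]_{(s)}\|_{1}\leq(\tfrac{1}{p}-1)^{-1}s^{1/2-1/p}\|\nabla f(x)\|_{2}$ (using $s^{-1/2}s^{1-1/p}=s^{1/2-1/p}=s^{-(2-p)/(2p)}$), together with $\|w\|_{2}\leq\tfrac{Ln}{2}$ from Lemma~\ref{lem:y_error}. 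Collecting the $s^{-(2-p)/(2p)}\|\nabla f(x)\|_{2}$ contributions and the term linear in $h$ then yields exactly \eqref{eq:2.5}.
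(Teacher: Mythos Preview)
Your overall architecture is right: decompose $y=Z[\nabla f(x)]_{(s)}+\bigl(Z(\nabla f(x)-[\nabla f(x)]_{(s)})+hw\bigr)$, invoke the CoSaMP convergence guarantee, bound the image of the tail via Lemma~\ref{lem:Y}, insert the compressibility estimates of Lemma~\ref{lem:X}, and use $\|w\|_{2}\leq Ln/2$ from Lemma~\ref{lem:y_error}. The paper does exactly this.

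Where you diverge is in how the CoSaMP step is invoked and, consequently, how the constants $13.2$ and $11$ arise. The paper does \emph{not} re-run the Needell--Tropp per-iteration invariant; it quotes an already-unrolled estimate due to Foucart (Theorem~5 and Remark~3 in \cite{foucart2012sparse}), valid precisely under $\delta_{4s}(Z)<0.22665$, of the form
\[
\|v^{\ell}-[\nabla f(x)]_{(s)}\|_{2}\leq\Bigl(\tfrac12\Bigr)^{\ell}\|[\nabla f(x)]_{(s)}\|_{2}+11\,\|e\|_{2},
\]
with a single constant $11$ in front of the effective noise $e=Z(\nabla f(x)-[\nabla f(x)]_{(s)})+hw$. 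The $13.2$ then appears as $11\sqrt{1+\delta_{4s}}+1\leq 11\sqrt{1.22665}+1\approx 13.18$: the factor $\sqrt{1+\delta_{4s}}$ comes from Lemma~\ref{lem:Y} when bounding $\|Z(\nabla f(x)-[\nabla f(x)]_{(s)})\|_{2}$, and the extra $+1$ is the triangle-inequality cost of passing from $\|v^{\ell}-[\nabla f(x)]_{(s)}\|_{2}$ to $\|v^{\ell}-\nabla f(x)\|_{2}$. Your proposed mechanism---a recursion on $\|v^{\ell}-\nabla f(x)\|_{2}$ with $c_{2}=6.6$, $c_{3}=5.5$ and a geometric doubling to $2c_{2}=13.2$, $2c_{3}=11$---is a plausible-looking but different decomposition, and those values of $c_{2},c_{3}$ are reverse-engineered rather than obtained from any stated CoSaMP lemma; in particular the asymmetry between the $\ell_{2}$-tail and $\ell_{1}$-tail coefficients in \eqref{eq:2.5} is not a consequence of two distinct recursion constants but of the extra $+1$ from the triangle inequality landing only on the $\ell_{2}$ term. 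Citing Foucart's result directly, as the paper does, spares you the unpleasant task you flagged as the ``main obstacle'' and makes the provenance of both $0.22665$ and $11$ transparent.
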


\begin{proof}
By Lemma \ref{lem:y_error}, 
\begin{equation*}
y=Z\left[\nabla f(x)\right]_{(s)}+e,
\end{equation*}
where 
\begin{equation*}
e=Z\left(\nabla f(x)-\left[\nabla f(x)\right]_{(s)}\right)+hw,
\end{equation*}
with $\|w\|_{2}\leq Ln/2$. Thus, by Theorem 5 and Remark 3 in \cite{foucart2012sparse} we have
\begin{eqnarray*}
\|v^{\ell}-\left[\nabla f(x)\right]_{(s)}\|_{2}&\leq & \left(\frac{1}{2}\right)^{\ell}\|\left[\nabla f(x)\right]_{(s)}\|_{2}+11\|e\|_{2}\\
&\leq & \left(\frac{1}{2}\right)^{\ell}\|\nabla f(x)\|_{2}+11\left\|Z\left(\nabla f(x)-\left[\nabla f(x)\right]_{(s)}\right)\right\|_{2}+11h\|w\|\\
&\leq & \left(\frac{1}{2}\right)^{\ell}\|\nabla f(x)\|_{2}+11\left\|Z\left(\nabla f(x)-\left[\nabla f(x)\right]_{(s)}\right)\right\|_{2}+\frac{11Ln}{2}h.
\end{eqnarray*}
Then, using Lemma \ref{lem:X}, we obtain
\begin{eqnarray}
\|v^{\ell}-\nabla f(x)\|_{2}&\leq &\|v^{\ell}-\left[\nabla f(x)\right]_{(s)}\|_{2}+\|\left[\nabla f(x)\right]_{(s)}-\nabla f(x)\|_{2}\nonumber\\
&\leq & 11\left\|Z\left(\nabla f(x)-\left[\nabla f(x)\right]_{(s)}\right)\right\|_{2}+\frac{11Ln}{2}h\nonumber\\
& & +\left[\left(\frac{1}{2}\right)^{\ell}+\frac{s^{\frac{1}{2}-\frac{1}{p}}}{\left(\frac{2}{p}-1\right)^{\frac{1}{2}}}\right]\|\nabla f(x)\|_{2}
\label{eq:A}
\end{eqnarray}
From Lemmas \ref{lem:X} and \ref{lem:Y}, and the assumption \textcolor{black}{$\delta_{4s}(Z)<0.22665$}, we also have
\small
\begin{eqnarray}
\left\|Z\left(\nabla f(x)-\left[\nabla f(x)\right]_{(s)}\right)\right\|_{2}&\leq & \sqrt{1+\delta_{4s}(Z)}\left(\left\|\nabla f(x)-\left[\nabla f(x)\right]_{(s)}\right\|+\frac{1}{\sqrt{s}}\left\|\nabla f(x)-\left[\nabla f(x)\right]_{(s)}\right\|_{1}\right)\nonumber\\
&\leq &\sqrt{1.22665}\left(\frac{1}{\left(\frac{2}{p}-1\right)^{\frac{1}{2}}}+\frac{1}{\left(\frac{1}{p}-1\right)}\right)s^{\frac{1}{2}-\frac{1}{p}}\|\nabla f(x)\|_{2}
\label{eq:B}
\end{eqnarray}
\normalsize
Finally, combining (\ref{eq:A}) and (\ref{eq:B}), we conclude that
\small
\begin{equation*}
\|v^{\ell}-\nabla f(x)\|_{2}\leq\left[\left(\frac{11\sqrt{1.22665}+1}{\left(\frac{2}{p}-1\right)^{\frac{1}{2}}}+\frac{11}{\left(\frac{1}{p}-1\right)}\right)s^{-\frac{2-p}{2p}}+\left(\frac{1}{2}\right)^{\ell}\right]\|\nabla f(x)\|_{2}+\frac{11Ln}{2}h,
\end{equation*}
\normalsize
which implies that (\ref{eq:2.5}) is true.\qed

\end{proof}

\noindent The following corollary may be deduced from Lemma~\ref{lem:2.2} by direct computation. 
\begin{corollary}
\label{cor:2.1}
Let $\theta,\epsilon\in (0,1)$. Under the assumptions of Lemma \ref{lem:2.2}, if 
\begin{equation}
\textcolor{black}{\ell\geq}\left\lceil\dfrac{\log(\theta/4)}{\log(0.5)}\right\rceil,
\label{eq:2.7}
\end{equation}
\begin{equation}
s\geq s(\theta,p):=\left[\frac{4}{\theta}\left(\frac{13.2}{\left(\frac{2}{p}-1\right)^{\frac{1}{2}}}+\frac{11}{\left(\frac{1}{p}-1\right)}\right)\right]^{\frac{2p}{2-p}} ,
\label{eq:effective_sparsity}
\end{equation}
and
\begin{equation}
0<h\leq\dfrac{\theta}{11 L n}\epsilon,
\label{eq:2.9}
\end{equation}
then, for $g=v^{\ell}$ we have
\begin{equation}
\|g-\nabla f(x)\|\leq\dfrac{\theta}{2}\|\nabla f(x)\|+\dfrac{\theta}{2}\epsilon.
\label{eq:2.10}
\end{equation}
We call the quantity $s(\theta,p)$ defined in \eqref{eq:effective_sparsity} the effective sparsity level. We emphasize that $s(\theta,p)$ is independent of $n$. 
\end{corollary}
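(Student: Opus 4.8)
The plan is to take the bound \eqref{eq:2.5} from Lemma~\ref{lem:2.2} and show that each of the three terms on the right-hand side is controlled under the stated hypotheses. Write the bound as
\begin{equation*}
\|v^{\ell}-\nabla f(x)\|_{2}\leq\left[\underbrace{\left(\frac{13.2}{\left(\frac{2}{p}-1\right)^{\frac{1}{2}}}+\frac{11}{\left(\frac{1}{p}-1\right)}\right)s^{-\frac{2-p}{2p}}}_{(\mathrm{I})}+\underbrace{\left(\tfrac{1}{2}\right)^{\ell}}_{(\mathrm{II})}\right]\|\nabla f(x)\|_{2}+\underbrace{\tfrac{11Ln}{2}h}_{(\mathrm{III})}.
\end{equation*}
The goal \eqref{eq:2.10} is to get the coefficient of $\|\nabla f(x)\|$ down to $\theta/2$ and the additive term down to $(\theta/2)\epsilon$. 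A natural split is to force $(\mathrm{I})\leq\theta/4$ and $(\mathrm{II})\leq\theta/4$ so that their sum is at most $\theta/2$, and to force $(\mathrm{III})\leq(\theta/2)\epsilon$.

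First I would handle term $(\mathrm{II})$: requiring $(1/2)^{\ell}\leq\theta/4$ is equivalent, after taking logarithms and noting $\log(0.5)<0$, to $\ell\geq \log(\theta/4)/\log(0.5)$, and since $\ell$ is an integer this is exactly the ceiling condition \eqref{eq:2.7}. Next, term $(\mathrm{I})$: abbreviating the constant $C(p):=\frac{13.2}{(2/p-1)^{1/2}}+\frac{11}{(1/p-1)}$, the requirement $C(p)\,s^{-\frac{2-p}{2p}}\leq\theta/4$ is equivalent to $s^{\frac{2-p}{2p}}\geq \frac{4}{\theta}C(p)$, i.e. $s\geq\bigl(\frac{4}{\theta}C(p)\bigr)^{\frac{2p}{2-p}}$, which is precisely the definition of $s(\theta,p)$ in \eqref{eq:effective_sparsity}; here one uses that $p\in(0,1)$ so that the exponent $\frac{2-p}{2p}>0$ and the map $s\mapsto s^{\frac{2-p}{2p}}$ is increasing. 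Finally, term $(\mathrm{III})$: $\frac{11Ln}{2}h\leq(\theta/2)\epsilon$ rearranges directly to $h\leq\frac{\theta}{11Ln}\epsilon$, which is \eqref{eq:2.9}. Combining the three, with $g=v^{\ell}$, gives $\|g-\nabla f(x)\|\leq(\theta/4+\theta/4)\|\nabla f(x)\|+(\theta/2)\epsilon=(\theta/2)\|\nabla f(x)\|+(\theta/2)\epsilon$, which is \eqref{eq:2.10}.

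There is really no serious obstacle here — the corollary is a direct bookkeeping consequence of Lemma~\ref{lem:2.2}, and the author even flags it as deducible ``by direct computation.'' The only points that need a little care are: (a) verifying the equivalence between the ceiling bound on $\ell$ and the inequality $(1/2)^{\ell}\le\theta/4$, which hinges on monotonicity of $t\mapsto(1/2)^t$ and the fact that rounding up preserves the inequality; (b) checking that the exponent manipulation for $s$ is valid, i.e. that $\frac{2-p}{2p}>0$ for $p\in(0,1)$ so one may raise both sides to the reciprocal power without reversing the inequality; and (c) noting that \eqref{eq:2.5} holds for \emph{every} $\ell\ge 0$, so it in particular holds for any $\ell$ satisfying \eqref{eq:2.7}. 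The claim that $s(\theta,p)$ is independent of $n$ is immediate from inspecting \eqref{eq:effective_sparsity}: neither $C(p)$ nor the exponent $\frac{2p}{2-p}$ involves $n$. One should also remark that $s(\theta,p)$ as defined need not be an integer, so in applications one takes $s=\lceil s(\theta,p)\rceil$; since the bound \eqref{eq:2.5} is monotone decreasing in $s$ through term $(\mathrm{I})$, enlarging $s$ to the next integer only helps, so \eqref{eq:2.10} still holds.
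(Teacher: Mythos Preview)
Your proof is correct and is precisely the ``direct computation'' the paper alludes to: the paper does not spell out the argument, but splitting the right-hand side of \eqref{eq:2.5} into the three terms and bounding $(\mathrm{I})\le\theta/4$, $(\mathrm{II})\le\theta/4$, $(\mathrm{III})\le(\theta/2)\epsilon$ via \eqref{eq:effective_sparsity}, \eqref{eq:2.7}, and \eqref{eq:2.9} respectively is exactly the intended route.
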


\noindent \textcolor{black}{Note that, from Lemma 5, the number of iterations of CoSaMP only affects the factor \( (1/2)^{\ell} \) in (\ref{eq:2.5}), leaving the other factors unchanged. The per-iteration cost of CoSaMP is \( \mathcal{O}(mn + ms^2) \) operations. Thus, Corollary 1 suggests that it is sufficient to perform 
\begin{equation*}
\ell=\left\lceil\dfrac{\log(\theta/4)}{\log(0.5)}\right\rceil
\end{equation*}
iterations to guarantee (\ref{eq:2.10}), provided that (\ref{eq:effective_sparsity}) and (\ref{eq:2.9}) hold, thereby saving computational effort. For example, if $\theta=0.25$, it follows that $\ell=4$ iterations of CoSaMP would be enough.}

\subsection{Sufficient Decrease Condition}

From Corollary \ref{cor:2.1}, we see that if $s$ is sufficiently large and $h$ is sufficiently small, then CoSaMP applied to (\ref{eq:sparse_recovery}) is able to find a vector $g$ that satisfies 
\begin{equation}
\|g-\nabla f(x)\|\leq\dfrac{\theta}{2}\|\nabla f(x)\|+\dfrac{\theta}{2}\epsilon
\label{eq:2.11}
\end{equation}
whenever the matrix $Z$ satisfies the $4s$-RIP with parameter $\delta_{4s}(Z)<0.22665$. The next lemma justifies our interest in this type of gradient approximation.

\begin{lemma}[Descent Lemma]
\label{lem:2.3}
Suppose that A1 holds. Given $\epsilon\in (0,1)$ and $\theta\in [0,1/2)$, let $x\in\mathbb{R}^{n}$ and $g\in\mathbb{R}^{n}$ be such that $\|\nabla f(x)\|>\epsilon$ and (\ref{eq:2.11}) hold. Moreover, let 
\begin{equation*}
x^{+}=x-\frac{1}{\sigma}g
\end{equation*}
for some $\sigma>0$. If 
\begin{equation}
\sigma\geq\dfrac{(\theta +1)^{2}}{(1-2\theta)}L
\label{eq:2.12}
\end{equation}
then
\begin{equation}
f(x)-f(x^{+})\geq\dfrac{1}{2\sigma}\epsilon^{2}.
\label{eq:2.13}
\end{equation}
\end{lemma}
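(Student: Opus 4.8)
The plan is to combine the standard descent inequality coming from $L$-Lipschitz continuity of $\nabla f$ with the approximation guarantee \eqref{eq:2.11} for $g$, and to exploit the fact that the threshold \eqref{eq:2.12} on $\sigma$ is tuned precisely so the error terms are absorbed. First I would set $e := g - \nabla f(x)$ and observe that \eqref{eq:2.11} together with $\|\nabla f(x)\| > \epsilon$ yields
\begin{equation*}
\|e\| \le \frac{\theta}{2}\|\nabla f(x)\| + \frac{\theta}{2}\epsilon \le \frac{\theta}{2}\|\nabla f(x)\| + \frac{\theta}{2}\|\nabla f(x)\| = \theta\,\|\nabla f(x)\|.
\end{equation*}
This step is the only real idea in the proof: it converts the ``relative-plus-absolute'' error bound into a clean relative bound $\|e\| \le \theta\|\nabla f(x)\|$; everything afterward is bookkeeping.

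Next, from A1 and the usual consequence of Lipschitz continuity of the gradient,
\begin{equation*}
f(x^{+}) \le f(x) + \nabla f(x)^{T}(x^{+}-x) + \frac{L}{2}\|x^{+}-x\|^{2},
\end{equation*}
and substituting $x^{+}-x = -\tfrac{1}{\sigma}g$ gives
\begin{equation*}
f(x) - f(x^{+}) \ge \frac{1}{\sigma}\,\nabla f(x)^{T}g - \frac{L}{2\sigma^{2}}\|g\|^{2}.
\end{equation*}
I would then estimate the two terms using $g = \nabla f(x) + e$: by Cauchy--Schwarz and $\|e\|\le\theta\|\nabla f(x)\|$,
\begin{equation*}
\nabla f(x)^{T}g \ge \|\nabla f(x)\|^{2} - \|\nabla f(x)\|\,\|e\| \ge (1-\theta)\|\nabla f(x)\|^{2},
\end{equation*}
while by the triangle inequality $\|g\|^{2} \le (\|\nabla f(x)\| + \|e\|)^{2} \le (1+\theta)^{2}\|\nabla f(x)\|^{2}$.

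Plugging these bounds in and using \eqref{eq:2.12}, which gives $\tfrac{L}{2\sigma} \le \tfrac{1-2\theta}{2(\theta+1)^{2}}$, the factor $(1+\theta)^{2}$ cancels and one is left with
\begin{equation*}
f(x)-f(x^{+}) \ge \frac{1}{\sigma}\left[(1-\theta) - \frac{1-2\theta}{2}\right]\|\nabla f(x)\|^{2} = \frac{1}{2\sigma}\|\nabla f(x)\|^{2},
\end{equation*}
since $(1-\theta) - \tfrac{1-2\theta}{2} = \tfrac12$. Invoking $\|\nabla f(x)\| > \epsilon$ once more then yields $f(x)-f(x^{+}) \ge \tfrac{1}{2\sigma}\epsilon^{2}$, which is \eqref{eq:2.13}. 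There is no substantive obstacle; the points to watch are that the constant in \eqref{eq:2.12} is exactly what makes $(1+\theta)^{2}$ telescope against $1-2\theta$, that $\theta \in [0,1/2)$ keeps $1-2\theta>0$ so the bound is meaningful, and that the comparison $\epsilon < \|\nabla f(x)\|$ is used twice — first to pass from the mixed error bound to $\|e\|\le\theta\|\nabla f(x)\|$, and again at the very end.
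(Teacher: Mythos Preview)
Your proof is correct and follows essentially the same approach as the paper: derive $\|g-\nabla f(x)\|\le\theta\|\nabla f(x)\|$ from \eqref{eq:2.11} and $\|\nabla f(x)\|>\epsilon$, apply the quadratic upper bound from A1, control $\nabla f(x)^{T}g$ and $\|g\|^{2}$ via Cauchy--Schwarz and the triangle inequality, then use \eqref{eq:2.12} to reduce the coefficient to $\tfrac{1}{2\sigma}$. The only cosmetic difference is that the paper keeps the factor $\tfrac{L}{2\sigma}(\theta+1)^{2}$ explicit and checks $1-\theta-\tfrac{L}{2\sigma}(\theta+1)^{2}\ge\tfrac12$ as a separate step, whereas you substitute the bound $\tfrac{L}{2\sigma}\le\tfrac{1-2\theta}{2(\theta+1)^{2}}$ directly.
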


\begin{proof}
Since $\|\nabla f(x)\|>\epsilon$, it follows from (\ref{eq:2.11}) that
\begin{equation}
\|\nabla f(x)-g\|\leq\theta\|\nabla f(x)\|.
\label{eq:2.14}
\end{equation}
Using assumption A1, the Cauchy-Schwarz inequality and (\ref{eq:2.14}) we get
\begin{eqnarray}
f(x^{+})&\leq & f(x)+\nabla f(x)^{T}\left(-\frac{1}{\sigma}g\right)+\dfrac{L}{2}\|\frac{1}{\sigma}g\|^{2}\nonumber\\
            & =   & f(x)-\frac{1}{\sigma}\nabla f(x)^{T}g+\dfrac{L}{2\sigma^{2}}\|g\|^{2}\nonumber\\
            &  =  & f(x)-\frac{1}{\sigma}\nabla f(x)^{T}\left[\nabla f(x)-\nabla f(x)+g\right]+\dfrac{L}{2\sigma^{2}}\|g\|^{2}\nonumber\\
            &\leq & f(x)-\frac{1}{\sigma}\|\nabla f(x)\|^{2}+\frac{1}{\sigma}\|\nabla f(x)\|\|\nabla f(x)-g\|+\dfrac{L}{2\sigma^{2}}\|g\|^{2}\nonumber\\
            &\leq & f(x)-\frac{1}{\sigma}\|\nabla f(x)\|^{2}+\dfrac{\theta}{\sigma}\|\nabla f(x)\|^{2}+\dfrac{L}{2\sigma^{2}}\|g\|^{2}.
\label{eq:2.15}
\end{eqnarray}
On the other hand, by (\ref{eq:2.11}) we also have
\begin{equation*}
\|g\|\leq \|g-\nabla f(x)\|+\|\nabla f(x)\|\leq\theta\|\nabla f(x)\|+\|\nabla f(x)\|=(\theta+1)\|\nabla f(x)\|,
\end{equation*}
which gives
\begin{equation}
\|g\|^{2}\leq (\theta +1)^{2}\|\nabla f(x)\|^{2}.
\label{eq:2.16}
\end{equation}
Combining (\ref{eq:2.15}) and (\ref{eq:2.16}) it follows that
\begin{eqnarray*}
f(x^{+})&\leq & f(x)-\left(\frac{1}{\sigma}-\frac{\theta}{\sigma}-\frac{L}{2\sigma^{2}}(\theta+1)^{2}\right)\|\nabla f(x)\|^{2}\\
            & =   &f(x)-\left(1-\theta-\frac{L}{2\sigma}(\theta+1)^{2}\right)\frac{1}{\sigma}\|\nabla f(x)\|^{2},
\end{eqnarray*}
and so
\begin{equation}
f(x)-f(x^{+})\geq \left(1-\theta-\frac{L}{2\sigma}(\theta+1)^{2}\right)\frac{1}{\sigma}\|\nabla f(x)\|^{2}.
\label{eq:2.17}
\end{equation}
By (\ref{eq:2.12}) we have
\begin{equation}
1-\theta-\frac{L}{2\sigma}(\theta+1)^{2}\geq\dfrac{1}{2}.
\label{eq:2.18}
\end{equation}
Finally, combining (\ref{eq:2.17}), (\ref{eq:2.18}) and using the inequality $\|\nabla f(x)\|>\epsilon$, we obtain (\ref{eq:2.13}).\qed
\end{proof}

The lemma below suggests a way to construct a gradient approximation $g$ that satisfies (\ref{eq:2.11}) with probability one. 

\begin{lemma}
Suppose that A1 holds. Given $x\in\mathbb{R}^{n}$, $\epsilon,\textcolor{black}{\theta}>0$, let $g\in\mathbb{R}^{n}$ be given by
\begin{equation}
g_{i}=\dfrac{f(x+h e_{i})-f(x)}{h},\quad i=1,\ldots,n,
\label{eq:extra1}
\end{equation}
with 
\begin{equation}
0<h\leq\dfrac{2\theta}{L\sqrt{n}}\epsilon.
\label{eq:extra2}
\end{equation}
If $\|\nabla f(x)\|>\epsilon$, then $g$ satisfies (\ref{eq:2.11}).
\label{lem:2.4}
\end{lemma}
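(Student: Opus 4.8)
The plan is to bound the error of the forward-difference gradient $g$ defined in (\ref{eq:extra1}) componentwise, and then sum up. First I would use assumption A1 (the standard quadratic upper bound from $L$-Lipschitz continuity of $\nabla f$) applied along the coordinate direction $e_i$ to get
\begin{equation*}
\left|f(x+he_i)-f(x)-h\,\partial_i f(x)\right|\leq\dfrac{L}{2}h^2\|e_i\|^2=\dfrac{L}{2}h^2,
\end{equation*}
since $\|e_i\|=1$. Dividing through by $h>0$ gives $\left|g_i-\partial_i f(x)\right|\leq\frac{L}{2}h$ for each $i=1,\ldots,n$.

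Next I would aggregate these $n$ componentwise bounds into a bound on $\|g-\nabla f(x)\|_2$:
\begin{equation*}
\|g-\nabla f(x)\|_2=\left(\sum_{i=1}^{n}\left|g_i-\partial_i f(x)\right|^2\right)^{1/2}\leq\left(\sum_{i=1}^{n}\dfrac{L^2}{4}h^2\right)^{1/2}=\dfrac{L\sqrt{n}}{2}h.
\end{equation*}
Now invoke the hypothesis (\ref{eq:extra2}), namely $h\leq\frac{2\theta}{L\sqrt{n}}\epsilon$, which yields $\frac{L\sqrt{n}}{2}h\leq\theta\epsilon$, so $\|g-\nabla f(x)\|\leq\theta\epsilon$. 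Finally, since $\|\nabla f(x)\|>\epsilon$ by assumption, we have $\theta\epsilon<\theta\|\nabla f(x)\|$, and therefore
\begin{equation*}
\|g-\nabla f(x)\|\leq\theta\epsilon=\dfrac{\theta}{2}\epsilon+\dfrac{\theta}{2}\epsilon<\dfrac{\theta}{2}\|\nabla f(x)\|+\dfrac{\theta}{2}\epsilon,
\end{equation*}
which is exactly (\ref{eq:2.11}). Strictly, one can also simply write $\|g-\nabla f(x)\|\leq\theta\epsilon\leq\frac{\theta}{2}\|\nabla f(x)\|+\frac{\theta}{2}\epsilon$ by splitting $\theta\epsilon$ and bounding one copy of $\epsilon$ by $\|\nabla f(x)\|$.

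There is no real obstacle here — the only thing to be careful about is the bookkeeping on constants, i.e. making sure the factor $\frac{L\sqrt{n}}{2}$ from the $\ell_2$-aggregation cancels correctly against the prescribed step size $h$ to land a bound of $\theta\epsilon$ (not $2\theta\epsilon$ or $\frac{\theta}{2}\epsilon$), and then splitting that cleanly into the two halves of (\ref{eq:2.11}). Note this lemma gives a deterministic ("with probability one") construction, in contrast to the compressed-sensing-based $g=v^\ell$ of Corollary~\ref{cor:2.1}, at the price of $n$ function evaluations instead of $\mathcal{O}(s\log n)$.
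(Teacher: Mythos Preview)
Your proof is correct and follows exactly the same approach as the paper: bound each coordinate of $g-\nabla f(x)$ by $\tfrac{L}{2}h$ via A1, aggregate to $\|\nabla f(x)-g\|\leq \tfrac{L\sqrt{n}}{2}h$, apply (\ref{eq:extra2}) to get $\theta\epsilon$, and then split this into the two halves of (\ref{eq:2.11}) using $\epsilon<\|\nabla f(x)\|$. The paper's write-up is simply a terser one-line version of your argument.
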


\begin{proof}
By (\ref{eq:extra1}), A1, (\ref{eq:extra2}) and $\epsilon<\|\nabla f(x)\|$, we have
\begin{equation*}
\|\nabla f(x)-g\|\leq\dfrac{L\sqrt{n}}{2}h\leq\theta\epsilon<\frac{\theta}{2}\|\nabla f(x)\|+\frac{\theta}{2}\epsilon.
\end{equation*}
\qed
\end{proof}

\section{New Method and its Complexity}

The proposed method is presented below as Algorithm 1. We call it Fully Adaptive {\tt ZORO}, or {\tt ZORO-FA}, as the target sparsity ($s$), sampling radius ($h$), step-size ($1/\sigma$), and number of queries ($m$) are all adaptively selected. {\tt ZORO-FA} includes a safeguard mechanism, ensuring that it will make progress even when $\nabla f(x)$ {\em does not} satisfy the compressibility assumption A3. 

\begin{mdframed}
\noindent\textbf{Algorithm 1.} Fully Adaptive {\tt ZORO}, or {\tt ZORO-FA}. 
\\[0.2cm]
\textbf{Step 0.} Given $x_{0}\in\mathbb{R}^{n}$, $\epsilon\in (0,1)$, $\theta\in (0,1/2)$, $b \geq 1$, and $\sigma_{0}>0$. Choose \textcolor{black}{$s_{0}\in\mathbb{N}$} such that $\lceil b s_{0}\ln(n)\rceil\textcolor{black}{\leq n/4}$, sample i.i.d. Rademacher random vectors $\{z_i\}_{i=1}^n$, and set $k:=0$.
\\[0.2cm]
\textbf{Step 1.} Set $j:=0$.
\\[0.2cm]
\textbf{Step 2.1} Set $s_j = 2^js_0$, $\sigma_j = 2^j\sigma_0$, and $m_{j}=\left\lceil bs_{j}\ln(n)\right\rceil$. If $m_{j}\geq n$ go to Step 2.4.
\\[0.2cm]
\noindent\textbf{Step 2.2} Set
\begin{equation*}
    h_{j}=\dfrac{\theta}{11 n \sigma_{j}}\epsilon\quad\text{and}\quad Z^{(j)} = \frac{1}{\sqrt{m_j}} \begin{bmatrix} z_1 & \ldots & z_{m_j} \end{bmatrix}.
\end{equation*}
Compute $y_k^{(j)}\in\mathbb{R}^{m_{j}}$ by
\begin{equation*}
\textcolor{black}{\left[y^{(j)}_{k}\right]_{i}=\dfrac{f(x_{k}+h_{j}z_{i})-f(x_{k})}{\sqrt{m_j}h_{j}},\quad i=1,\ldots,m_{j}}.
\end{equation*}
\\[0.2cm]
\noindent\textbf{Step 2.3} Compute
\begin{equation*}
    g_{k}^{(j)}\approx \arg\min_{\|g\|_{0}\leq s_{j}}\|Z^{(j)}g-y^{(j)}_{k}\|_{2}
\end{equation*}
by applying $\left\lceil\dfrac{\log\left(\theta/4\right)}{\log(0.5)}\right\rceil$ iterations of CoSaMP. Go to Step 2.5.

\noindent\textbf{Step 2.4} For 
\begin{equation*}
    h_j = \frac{2\theta}{\sigma_j\sqrt{n}}\epsilon,
\end{equation*}
compute $g_k^{(j)} \in \mathbb{R}^n$ by 
\begin{equation*}
    [g_k^{(j)}]_{\ell} = \frac{f(x_k + h_je_{\ell}) - f(x_k)}{h_j}, \ \ell = 1,\ldots, n.
\end{equation*}
\\[0.2cm]
\noindent\textbf{Step 2.5} Let $x_{k,j}^{+}=x_{k}-\left(\frac{1}{\sigma_{j}}\right)g_{k}^{(j)}$. If 
\begin{equation}
    f(x_{k})-f(x_{k,j}^{+})\geq\dfrac{1}{2\sigma_{j}}\epsilon^{2}
    \label{eq:decrease}
\end{equation}
holds, set $j_{k}=j$, $g_{k}=g_{k}^{(j)}$, $s_{k}=s_{j_{k}}$, $\sigma_{k}=\sigma_{j_{k}}$ and go to Step 3. Otherwise, set $j:=j+1$ and go to Step 2.1.
\\[0.2cm]
\noindent\textbf{Step 3.} Define $x_{k+1}=x_{k,j_{k}}^{+}$ set $k:=k+1$ and go to Step 1.
\end{mdframed}
\textcolor{black}{
\begin{remark}
    Note that vectors $\left\{z_{i}\right\}_{i=1}^{n}$ are sampled only once, at Step 0. Moreover, in step 2.2 it is important to evaluate the finite difference using the {\em unscaled} $z_i$ (i.e. compute $f(x_k + hz_i)$), not the $i$-th column of $Z^{(j)}$.% which has been scaled (i.e. $f(x_k + h_j\frac{1}{\sqrt{m_j}}z_i)$).
\end{remark}
}
\begin{remark}
Note that {\tt ZORO-FA} bears some resemblence to the adaptive {\tt ZORO}, or {\tt adaZORO} algorithm presented in \cite{cai2022zeroth}. However, that algorithm only selects the target sparsity ($s$) adaptively. Moreover, we are able to provide theoretical complexity guarantees for {\tt ZORO-FA}, whereas no such guarantees are presented for {\tt adaZORO}. 
\end{remark}

\subsection{Oracle Complexity Without Compressible Gradients}

\textcolor{black}{In this subsection, we analyze Algorithm 1 under Assumptions A1 and A2 only; that is, we do not assume that the gradients of \( f(\,\cdot\,) \) are compressible. We start by establishing an upper bound on the number of inner loops \( j_k \) required to find a point that yields (\ref{eq:decrease}).
}

\begin{lemma}
\label{lem:plus1}
Suppose that A1 holds. If $\|\nabla f(x_{k})\|>\epsilon$, then
\begin{equation}
0\leq j_{k}<\max\left\{1,\log_{2}\left(\dfrac{2n}{bs_{0}\ln(n)}\right),\log_{2}\left(\dfrac{2(\theta+1)^{2}}{(1-2\theta)}\dfrac{L}{\sigma_{0}}\right)\right\}.
\label{eq:plus1}
\end{equation}
\end{lemma}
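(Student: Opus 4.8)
The plan is to show that once the inner index $j$ is large enough that any one of the three mechanisms (dense finite differences, large enough RIP-compatible $m_j$, or large enough $\sigma_j$) kicks in, the sufficient-decrease test \eqref{eq:decrease} must succeed, so the loop terminates. Concretely, I would argue by contradiction: suppose $j_k$ equals (or exceeds) the maximum on the right-hand side of \eqref{eq:plus1}; I will derive that \eqref{eq:decrease} holds at that $j$, contradicting the fact that the loop did not stop earlier. The bound $j_k \geq 0$ is immediate since the inner counter starts at $j=0$.

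First I would split into the two branches of Step 2. If $m_j \geq n$, the algorithm executes Step 2.4, computing the forward finite-difference gradient $g_k^{(j)}$ with sampling radius $h_j = \tfrac{2\theta}{\sigma_j\sqrt n}\epsilon$. Since $\|\nabla f(x_k)\| > \epsilon$, Lemma~\ref{lem:2.4} (applied with the step $h_j$, which satisfies \eqref{eq:extra2}) gives that $g_k^{(j)}$ satisfies the approximation bound \eqref{eq:2.11}. Then, provided $\sigma_j \geq \tfrac{(\theta+1)^2}{1-2\theta}L$, the Descent Lemma~\ref{lem:2.3} yields exactly \eqref{eq:decrease}, so $j_k = j$. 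The condition $m_j \geq n$ translates, via $m_j = \lceil b s_j \ln(n)\rceil = \lceil 2^j b s_0 \ln(n)\rceil$, into $2^j \geq \tfrac{n}{b s_0 \ln(n)}$ roughly, i.e. $j \geq \log_2\!\big(\tfrac{n}{b s_0 \ln(n)}\big)$ — hence the second term in the max (with the factor $2$ giving a safe margin for the ceiling). The condition $\sigma_j = 2^j\sigma_0 \geq \tfrac{(\theta+1)^2}{1-2\theta}L$ is $j \geq \log_2\!\big(\tfrac{(\theta+1)^2}{1-2\theta}\tfrac{L}{\sigma_0}\big)$ — the third term. So once $j$ exceeds the max of these two (and the max already includes both), Step 2.4 forces termination.

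Next, for the branch where $m_j < n$ (Step 2.2–2.3): here $g_k^{(j)}$ is produced by $\lceil \log(\theta/4)/\log(0.5)\rceil$ iterations of CoSaMP on problem \eqref{eq:sparse_recovery} with radius $h_j = \tfrac{\theta}{11 n \sigma_j}\epsilon$. To invoke Corollary~\ref{cor:2.1} I would want $Z^{(j)}$ to satisfy the $4s_j$-RIP with $\delta_{4s_j}(Z^{(j)}) < 0.22665$, $s_j \geq s(\theta,p)$, and $h_j \leq \tfrac{\theta}{11Ln}\epsilon$; the last holds as soon as $\sigma_j \geq L$, which is implied by the third term in the max. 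The subtlety is that here we are in the subsection that does \emph{not} assume A3, so one cannot yet claim \eqref{eq:2.11} holds in this branch — and indeed the lemma statement only bounds $j_k$ from above without asserting it is achieved early. The cleanest reading (and I expect this is what the authors intend) is that the upper bound \eqref{eq:plus1} is a \emph{termination guarantee}: regardless of whether the cheap CoSaMP step at smaller $j$ happens to succeed, once $j$ reaches the threshold the algorithm is in the Step 2.4 regime (because $m_j \geq n$) with $\sigma_j$ large, and there termination is forced by Lemmas~\ref{lem:2.4} and~\ref{lem:2.3}. So the proof only needs the Step 2.4 analysis plus the observation that $j$ cannot keep growing past the point where both $m_j \geq n$ and $\sigma_j \geq \tfrac{(\theta+1)^2}{1-2\theta}L$ hold.

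The main obstacle is bookkeeping the ceiling in $m_j = \lceil b s_j \ln(n)\rceil$ and making sure the stated threshold $\log_2\!\big(\tfrac{2n}{b s_0 \ln(n)}\big)$ (note the factor $2$) genuinely forces $m_j \geq n$; one should check that if $2^j b s_0 \ln(n) \geq 2n$ then $\lceil 2^j b s_0 \ln(n)\rceil \geq n$, which is clear. A secondary point to get right is that $j_k$ is defined as the \emph{first} $j$ at which \eqref{eq:decrease} holds, so the strict inequality ``$j_k < \max\{\dots\}$'' follows because at the value $j = \lceil \max\{\dots\}\rceil$ (or even slightly below, once both relevant inequalities are active) the test necessarily succeeds, so the first success index is strictly smaller — modulo the trivial case handled by the ``$1$'' inside the max when the other two terms are tiny or negative. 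I would write the argument as: let $\bar\jmath$ be the smallest integer with $2^{\bar\jmath} \geq \max\{1, \tfrac{2n}{bs_0\ln n}, \tfrac{2(\theta+1)^2}{(1-2\theta)}\tfrac{L}{\sigma_0}\}$; show the loop stops at or before $\bar\jmath$; conclude $j_k \leq \bar\jmath$ and then sharpen to the strict bound stated.
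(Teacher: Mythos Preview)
Your approach matches the paper's: argue by contradiction, ignore the CoSaMP branch since A3 is not in force, and use Lemmas~\ref{lem:2.4} and~\ref{lem:2.3} to force \eqref{eq:decrease} once both $m_j\ge n$ and $\sigma_j$ is large. Where you diverge is in obtaining the \emph{strict} inequality. Showing \eqref{eq:decrease} at $j=j_k$ is no contradiction (that is precisely the definition of $j_k$), and your ``show $j_k\le\bar\jmath$ then sharpen'' leaves the sharpening unspecified. The paper's device is cleaner: assume $j_k\ge M:=\max\{1,\log_2(\tfrac{2n}{bs_0\ln n}),\log_2(\tfrac{2(\theta+1)^2}{1-2\theta}\tfrac{L}{\sigma_0})\}$ and verify \eqref{eq:decrease} at $j_k-1$. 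The factor $2$ inside each logarithm is exactly what survives this shift (not a ``margin for the ceiling''): from $j_k\ge M$ one gets $2^{j_k-1}\ge \tfrac{n}{bs_0\ln n}$ (hence $m_{j_k-1}\ge n$, so Step~2.4 is executed) and $\sigma_{j_k-1}=2^{j_k-1}\sigma_0\ge\tfrac{(\theta+1)^2}{1-2\theta}L$, while the $1$ in the max guarantees $j_k-1\ge 0$. This produces the contradiction and the strict bound in one stroke. One further point: before invoking Lemma~\ref{lem:2.4} you must check \eqref{eq:extra2}, i.e.\ $h_{j_k-1}\le\tfrac{2\theta}{L\sqrt n}\epsilon$; this follows because $\sigma_{j_k-1}\ge\tfrac{(\theta+1)^2}{1-2\theta}L>L$ (using $\theta\in(0,1/2)$), not automatically from the definition of $h_j$ in Step~2.4.
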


\begin{proof}
From Steps 1 and 2 of Algorithm 1, we see that $j_{k}$ is the smallest non-negative integer for which (\ref{eq:decrease}) holds. \textcolor{black}{Suppose by contradiction that (\ref{eq:plus1}) is not true}, i.e.,
\begin{equation}
j_{k}\geq \max\left\{1,\log_{2}\left(\dfrac{2n}{bs_{0}\ln(n)}\right),\log_{2}\left(\dfrac{2(\theta+1)^{2}}{(1-2\theta)}\dfrac{L}{\sigma_{0}}\right)\right\}.
\label{eq:plus2}
\end{equation}
Then
\begin{equation*}
m_{j_{k}-1}=\lceil{b s_{j_{k}-1}\ln(n)\rceil}\geq b\left(2^{j_{k}-1}s_{0}\right)\ln(n)\geq b\left(\dfrac{n}{b\ln(n)}\right)\ln(n)=n.
\end{equation*}
Consequently, by Steps 2.1 and 2.4 of Algorithm 1, it follows that $g_{k}^{(j_{k}-1)}$ was defined by
\begin{equation}
\left[g_{k}^{(j_{k}-1)}\right]_{i}=\dfrac{f(x_{k}+h_{j_{k}-1}e_{i})-f(x_{k})}{h_{j_{k}-1}},\quad i=1,\ldots,n,
\label{eq:plus3}
\end{equation}
with
\begin{equation}
h_{j_{k}-1}=\dfrac{2\theta}{\sigma_{j_{k}-1}\sqrt{n}}\epsilon.
\label{eq:plus4}
\end{equation}
In addition, (\ref{eq:plus2}) also implies that
\begin{equation}
\sigma_{j_{k}-1}=2^{j_{k}-1}\sigma_{0}\geq\dfrac{(\theta+1)^{2}}{(1-2\theta)}L
\label{eq:plus5}
\end{equation}
Since $\theta\in (0,1)$, (\ref{eq:plus4}) and (\ref{eq:plus5}) imply that
\begin{equation}
0<h_{j_{k}-1}<\dfrac{2\theta}{L\sqrt{n}}\epsilon.
\label{eq:plus6}
\end{equation}
In view of (\ref{eq:plus3}), (\ref{eq:plus6}) and assumption $\|\nabla f(x_{k})\|>\epsilon$, it follows from Lemma \ref{lem:2.4} that (\ref{eq:2.11}) holds for $x=x_{k}$ and $g=g_{k}^{(j_{k}-1)}$. Thus, by (\ref{eq:plus5}) and Lemma \ref{lem:2.3} we would have 
\begin{equation*}
f(x_{k})-f(x_{k,j_{k}-1}^{+})\geq\dfrac{1}{2\sigma_{j_{k}-1}}\epsilon^{2},
\end{equation*}
contradicting the definition of $j_{k}$.\qed
\end{proof}
As a consequence of Lemma \ref{lem:plus1}, we have the following result.

\begin{lemma}
Suppose that A1 holds. If $\|\nabla f(x_{k})\|>\epsilon$ then
\begin{equation*}
s_{k}\leq\max\left\{2s_{0},\left[\dfrac{2n}{b\ln(n)}\right],\left[\dfrac{2(\theta+1)^{2}L}{(1-2\theta)\sigma_{0}}\right]s_{0}\right\}.
\end{equation*}
and
\begin{equation*}
\sigma_{k}\leq\max\left\{2\sigma_{0},\left[\dfrac{2n}{b s_{0}\ln(n)}\right]\sigma_{0},\left[\dfrac{2(\theta+1)^{2}}{(1-2\theta)}\right]L\right\}.
\end{equation*}
\label{lem:plus2}
\end{lemma}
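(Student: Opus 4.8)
The plan is to deduce both bounds directly from Lemma~\ref{lem:plus1} by tracking how $s_k$ and $\sigma_k$ depend on $j_k$. Recall from Step~2.1 that $s_k = s_{j_k} = 2^{j_k} s_0$ and $\sigma_k = \sigma_{j_k} = 2^{j_k}\sigma_0$, so the entire task reduces to bounding $2^{j_k}$. Lemma~\ref{lem:plus1} gives
\begin{equation*}
j_k < \max\left\{1,\ \log_2\left(\frac{2n}{bs_0\ln(n)}\right),\ \log_2\left(\frac{2(\theta+1)^2 L}{(1-2\theta)\sigma_0}\right)\right\},
\end{equation*}
and since the function $t\mapsto 2^t$ is increasing and commutes with $\max$, this yields
\begin{equation*}
2^{j_k} < \max\left\{2,\ \frac{2n}{bs_0\ln(n)},\ \frac{2(\theta+1)^2 L}{(1-2\theta)\sigma_0}\right\}.
\end{equation*}

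From here the two claims follow by multiplying through: $s_k = 2^{j_k}s_0$ gives an upper bound $\max\{2s_0,\ 2n/(b\ln(n)),\ 2(\theta+1)^2 L s_0/((1-2\theta)\sigma_0)\}$, which is exactly the stated bound (up to the strict-versus-weak inequality, which I would absorb by noting $j_k$ is an integer, or simply accept the weak bound since the claim is stated with $\leq$). Similarly $\sigma_k = 2^{j_k}\sigma_0$ gives $\max\{2\sigma_0,\ 2n\sigma_0/(bs_0\ln(n)),\ 2(\theta+1)^2 L/(1-2\theta)\}$, matching the second displayed inequality. One small bookkeeping point: the paper writes $[\cdot]$ (ceiling, or perhaps just brackets for readability) around the middle and right terms in the $s_k$ bound and around the middle term in the $\sigma_k$ bound; I would either interpret these as ordinary parentheses or, if they denote ceilings, observe that since $s_k$ is a positive integer one may freely round the upper bound up without loss.

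There is essentially no obstacle here — this is a routine monotonicity-of-exponentials argument applied on top of Lemma~\ref{lem:plus1}. The only thing requiring a moment's care is confirming that the three cases in the $\max$ for $s_k$ line up correctly with those for $\sigma_k$ after scaling: the first case ($2^{j_k}<2$) scales to $2s_0$ and $2\sigma_0$ respectively; the second case ($2^{j_k} < 2n/(bs_0\ln n)$) scales to $2n/(b\ln n)$ for $s_k$ and to $(2n/(bs_0\ln n))\sigma_0$ for $\sigma_k$; and the third case scales to $(2(\theta+1)^2 L/((1-2\theta)\sigma_0))s_0$ for $s_k$ and to $(2(\theta+1)^2/(1-2\theta))L$ for $\sigma_k$. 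All three match the statement. Hence the proof is a two-line application of Lemma~\ref{lem:plus1}, and I would present it as such, remarking only that the strict inequality from Lemma~\ref{lem:plus1} is weakened to $\leq$ in the conclusion.
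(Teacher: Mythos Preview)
Your proposal is correct and matches the paper's approach exactly: the paper states this lemma as an immediate consequence of Lemma~\ref{lem:plus1} without giving a separate proof, and your argument (bounding $2^{j_k}$ via the strict inequality from Lemma~\ref{lem:plus1} and then multiplying by $s_0$ and $\sigma_0$) is precisely the intended one-line deduction.
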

Denote
\begin{equation}
T(\epsilon)=\inf\left\{k\in\mathbb{N}\,:\,\|\nabla f(x_{k})\|\leq\epsilon\right\}.
\label{eq:hitting}
\end{equation}

\begin{theorem}
\label{thm:plus1}
Suppose that A1-A2 hold and let $\left\{x_{k}\right\}_{k=0}^{T(\epsilon)}$ be generated by Algorithm 1. Then
\begin{equation}
T(\epsilon)\leq 2\max\left\{2\sigma_{0},\left[\dfrac{2n}{b s_{0}\ln(n)}\right]\sigma_{0},\left[\dfrac{2(\theta+1)^{2}}{(1-2\theta)}\right]L\right\}(f(x_{0})-f_{low})\epsilon^{-2}.
\label{eq:plus7}
\end{equation}
\end{theorem}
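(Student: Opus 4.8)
The plan is a standard potential-decrease (telescoping) argument, in which the only adaptive quantity that matters is the step-size parameter $\sigma_k$, which Lemma~\ref{lem:plus2} bounds uniformly. First I would fix any $k$ with $0\le k<T(\epsilon)$. By the definition \eqref{eq:hitting} of $T(\epsilon)$ this forces $\|\nabla f(x_k)\|>\epsilon$, so Lemma~\ref{lem:plus1} applies: the inner loop of Algorithm~1 terminates after finitely many steps, $j_k$ is well defined, and hence $x_{k+1}=x_{k,j_k}^{+}$ is well defined. Moreover, by the defining property of $j_k$ in Step~2.5, the acceptance test \eqref{eq:decrease} holds at $j=j_k$, i.e.
\begin{equation*}
f(x_k)-f(x_{k+1})\ge\frac{1}{2\sigma_k}\epsilon^{2}.
\end{equation*}

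Next I would apply Lemma~\ref{lem:plus2}: since $\|\nabla f(x_k)\|>\epsilon$, the lemma gives $\sigma_k\le M$, where
\begin{equation*}
M:=\max\left\{2\sigma_{0},\ \left[\frac{2n}{b s_{0}\ln(n)}\right]\sigma_{0},\ \left[\frac{2(\theta+1)^{2}}{1-2\theta}\right]L\right\}
\end{equation*}
is exactly the constant multiplying $(f(x_0)-f_{\mathrm{low}})\epsilon^{-2}$ in \eqref{eq:plus7}. Combining the two displays yields the uniform per-iteration decrease
\begin{equation*}
f(x_k)-f(x_{k+1})\ge\frac{1}{2M}\epsilon^{2}\qquad\text{for all }0\le k<T(\epsilon).
\end{equation*}

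Finally I would sum this inequality over $k=0,1,\ldots,N-1$ for an arbitrary $N\le T(\epsilon)$ (if $T(\epsilon)=\infty$ this is any $N\in\mathbb{N}$; if $T(\epsilon)=0$ the bound \eqref{eq:plus7} is trivial since its right-hand side is nonnegative). Telescoping the left-hand side and using assumption A2 ($f(x_N)\ge f_{\mathrm{low}}$) gives
\begin{equation*}
f(x_0)-f_{\mathrm{low}}\ge f(x_0)-f(x_N)\ge\frac{N}{2M}\epsilon^{2},
\end{equation*}
hence $N\le 2M(f(x_0)-f_{\mathrm{low}})\epsilon^{-2}$. Since this holds for every $N\le T(\epsilon)$, it both forces $T(\epsilon)$ to be finite and yields $T(\epsilon)\le 2M(f(x_0)-f_{\mathrm{low}})\epsilon^{-2}$, which is precisely \eqref{eq:plus7}. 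I do not anticipate a genuine obstacle: the computation is routine once Lemmas~\ref{lem:plus1} and \ref{lem:plus2} are in hand, and the only points needing a word of care are that the inner loop is guaranteed to return a valid step (supplied by Lemma~\ref{lem:plus1}) and the bookkeeping for the a priori possibly infinite $T(\epsilon)$, handled by the ``for every $N\le T(\epsilon)$'' phrasing above.
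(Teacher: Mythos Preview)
Your proposal is correct and follows essentially the same telescoping argument as the paper: invoke the acceptance condition \eqref{eq:decrease} at each $k<T(\epsilon)$, bound $\sigma_k$ via Lemma~\ref{lem:plus2}, sum, and use A2. Your version is slightly more careful in making explicit that Lemma~\ref{lem:plus1} guarantees termination of the inner loop and in handling the a priori possibility that $T(\epsilon)=\infty$, but the substance is identical.
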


\begin{proof}
From Step 2.5 of Algorithm 1, we have
\begin{equation*}
f(x_{k})-f(x_{k+1})\geq\dfrac{1}{2\sigma_{k}}\epsilon^{2},\quad\text{for}\,\,k=0,\ldots,T(\epsilon)-1.
\end{equation*}
Summing up these inequalities and using A2 and Lemma \ref{lem:plus2}, we obtain
\begin{eqnarray*}
f(x_{0})-f_{low}&\geq &\sum_{k=0}^{T(\epsilon)-1} f(x_{k})-f(x_{k+1})\geq \sum_{k=0}^{T(\epsilon)-1}\dfrac{1}{2\sigma_{k}}\epsilon^{2}\\
                &\geq &\dfrac{T(\epsilon)}{2\max\left\{2\sigma_{0},\left[\dfrac{2n}{b s_{0}\ln(n)}\right]\sigma_{0},\left[\dfrac{2(\theta+1)^{2}}{(1-2\theta)}\right]L\right\}}\epsilon^{2},
\end{eqnarray*}
which implies that (\ref{eq:plus7}) is true.\qed
\end{proof}

\begin{corollary}
Suppose that A1-A2 hold and denote by $FE_{T(\epsilon)}$ the total number of function evaluations performed by Algorithm 1 up to the $(T(\epsilon)-1)$-st iteration. Then
\small
\begin{eqnarray}
FE_{T(\epsilon)}&\leq & 2(n+1)\max\left\{1,\log_{2}\left(\dfrac{2n}{b s_{0}\ln(n)}\right),\log_{2}\left(\dfrac{2(\theta+1)^{2}}{1-2\theta}\dfrac{L}{\sigma_{0}}\right)\right\}\times\nonumber\\
& & \left[2\max\left\{2\sigma_{0},\left[\dfrac{2n}{b s_{0}\ln(n)}\right]\sigma_{0},\left[\dfrac{2(\theta+1)^{2}}{(1-2\theta)}\right]L\right\}(f(x_{0})-f_{low})\epsilon^{-2}\right].
\label{eq:plus8}
\end{eqnarray}
\label{cor:plus1}
\end{corollary}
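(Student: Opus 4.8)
The plan is to bound $FE_{T(\epsilon)}$ as the product of three quantities: (i) an upper bound on the number of function evaluations consumed by a single inner loop of Algorithm~1; (ii) an upper bound on the number of inner loops executed during one outer iteration; and (iii) the bound on the number of outer iterations $T(\epsilon)$ already provided by Theorem~\ref{thm:plus1}. Quantities (i) and (ii) are obtained by direct inspection of the algorithm, the latter with the help of Lemma~\ref{lem:plus1}.

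For (i), I would examine a single inner loop with index $j$ at outer iteration $k$, distinguishing the two branches determined by the test ``$m_j \ge n$'' in Step~2.1. If $m_j < n$, then Steps~2.2--2.3 build the sketched gradient $g_k^{(j)}$: forming $y_k^{(j)}\in\mathbb{R}^{m_j}$ requires the $m_j\le n-1$ evaluations $f(x_k+h_j z_i)$, while the $\lceil\log(\theta/4)/\log(0.5)\rceil$ iterations of CoSaMP in Step~2.3 use no function evaluations at all. If instead $m_j\ge n$, then Step~2.4 builds $g_k^{(j)}$ using the $n$ evaluations $f(x_k+h_j e_\ell)$. In both branches this costs at most $n$ evaluations, and Step~2.5 adds exactly one more, namely $f(x_{k,j}^{+})$. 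Since the value $f(x_k)$ is common to all inner loops of iteration $k$ --- and, for $k\ge 1$, coincides with $f(x_{k-1,j_{k-1}}^{+})$ already computed in Step~3 of iteration $k-1$ --- it need not be recomputed, so each inner loop costs at most $n+1$ function evaluations.

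For (ii), since $\|\nabla f(x_k)\|>\epsilon$ for every $k<T(\epsilon)$ by \eqref{eq:hitting}, Lemma~\ref{lem:plus1} applies and gives $0\le j_k < M$, where
\begin{equation*}
M:=\max\left\{1,\ \log_{2}\!\left(\frac{2n}{b s_{0}\ln(n)}\right),\ \log_{2}\!\left(\frac{2(\theta+1)^{2}}{1-2\theta}\,\frac{L}{\sigma_{0}}\right)\right\}.
\end{equation*}
As $j_k$ is a nonnegative integer and $M\ge 1$, this yields $j_k+1\le M+1\le 2M$, i.e. iteration $k$ runs at most $2M$ inner loops. Combining with (i), iteration $k$ performs at most $2M(n+1)$ function evaluations. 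Summing this bound over $k=0,\dots,T(\epsilon)-1$ and inserting the estimate for $T(\epsilon)$ from Theorem~\ref{thm:plus1} produces exactly the claimed inequality \eqref{eq:plus8}.

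The argument is essentially an exercise in bookkeeping, so I do not anticipate a genuine obstacle; the one place calling for some care is step~(i) --- verifying that neither the compressed-sensing branch (Step~2.2) nor the finite-difference branch (Step~2.4) ever uses more than $n$ evaluations, and that $f(x_k)$ may legitimately be reused across inner loops rather than recomputed --- since this is what keeps the per-iteration factor at $2(n+1)$ rather than a larger multiple of $n$.
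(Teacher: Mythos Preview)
Your proposal is correct and follows essentially the same route as the paper: bound the cost of each inner loop by $n+1$ evaluations, bound the number of inner loops per outer iteration via Lemma~\ref{lem:plus1}, and multiply by the iteration bound from Theorem~\ref{thm:plus1}. Your handling of the step $j_k+1\le 2M$ (via $j_k<M$ and $M\ge 1$) is in fact slightly cleaner than the paper's, which passes through the inequality $\sum_k(j_k+1)\le 2\sum_k j_k$ --- a step that is awkward when some $j_k=0$ --- but the two arguments are otherwise identical in structure and yield the same bound.
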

\normalsize

\begin{proof}
Let us consider the $k$th iteration of Algorithm 1. For any $j\in\left\{0,\ldots,j_{k}\right\}$, the computation of $g_{k}^{(j)}$ requires at most $n+1$ function evaluations. Moreover, checking the acceptance condition (\ref{eq:decrease}) at Step 2.5 requires one additional function evaluation. Thus, the total number of function evaluations at the $k$th iteration is bounded from above by $(n+1)(j_{k}+1)$. Therefore, by Lemma \ref{lem:plus1},
\small
\begin{eqnarray}
FE_{T(\epsilon)}&\leq & (n+1)\sum_{k=0}^{T(\epsilon)-1}(j_{k}+1)\leq 2(n+1)\sum_{k=0}^{T(\epsilon)-1}j_{k}\nonumber\\
                &\leq & 2(n+1)\max\left\{1,\log_{2}\left(\dfrac{2n}{bs_{0}\ln(n)}\right),\log_{2}\left(\dfrac{2(\theta+1)^{2}}{(1-2\theta)}\dfrac{L}{\sigma_{0}}\right)\right\}T(\epsilon).
\label{eq:plus9}
\end{eqnarray}
\normalsize
Finally, combining (\ref{eq:plus9}) and Theorem \ref{thm:plus1} we conclude that (\ref{eq:plus8}) is true.\qed
\end{proof}

\begin{remark}
Recall that \textcolor{black}{$m_{0}=\left\lceil b s_{0}\log(n)\right\rceil$}. Thus, it follows from Corollary \ref{cor:plus1} that 
\begin{equation*}
    FE_{T(\epsilon)}\leq\mathcal{O}\left(n\left[\left(\dfrac{2n}{m_{0}}\right)\log_{2}\left(\dfrac{2n}{m_{0}}\right)\right]\epsilon^{-2}\right).
\end{equation*}
\textcolor{black}{Therefore, our complexity bound ranges from \( \mathcal{O}(n\epsilon^{-2}) \) to \( \mathcal{O}(n^{2}\epsilon^{-2}) \), depending on the choice of \( s_0 \). When \( s_0 \) is chosen such that \( m_0 = \left\lceil b s_0 \ln(n) \right\rceil = n/4 \)}, Algorithm~1 requires no more than $\mathcal{O}\left(n\epsilon^{-2}\right)$ function evaluations to find an $\epsilon$-approximate stationary point of $f(\,\cdot\,)$.  \textcolor{black}{Regarding the dependence on $n$ and $\epsilon$, this bound is consistent, in order, with the complexity bounds established for the method DFQRM in  \cite{grapiglia2024quadratic}}. However, to take advantage of potential compressibility of gradients, \textcolor{black}{one should take $s_{0}$ sufficiently small such that $m_{0}\ll n$}. In this case, the worst-case complexity bound becomes $\mathcal{O}\left(n^{2}\epsilon^{-2}\right)$.
\end{remark}

\subsection{Oracle Complexity Assuming Compressible Gradients}

\textcolor{black}{In this subsection, we analyze Algorithm 1 under the additional assumption that the gradients of \( f(\,\cdot\,) \) are compressible (Assumption A3).} Note that $$J\equiv\left\lceil\log_{2}\left(\frac{n}{b s_{0}\ln(n)}\right)\right\rceil$$ 
is the smallest \textcolor{black}{positive} integer such that $m_{J}=\lceil b s_{J}\ln(n)\rceil\geq n$. Therefore,
\begin{equation*}
    Z^{(j)}=\dfrac{1}{\sqrt{m_{j}}}\left[z_{1}\ldots z_{m_{j}}\right],\quad j=0,\ldots,J-1
\end{equation*}
are the only possible sensing matrices used in Algorithm 1. \textcolor{black}{Moreover, since $s_{0}$ is chosen such that $\left\lceil bs_{0}\ln(n)\right\rceil\leq n/4$, we have $J\geq 2$.} In this subsection we will consider the following additional assumptions:
\begin{itemize}
\item[\textbf{A4.}] Under A1 and A3, the index
\begin{equation}
    j^{*}:=\left\lceil \max\left\{1,\log_{2}\left(\dfrac{s(\theta,p)}{s_{0}}\right),\log_{2}\left(\dfrac{(\theta+1)^{2}}{(1-2\theta)}\dfrac{L}{\sigma_{0}}\right)\right\} \right\rceil,
\label{eq:magic}
\end{equation}
with $s(\theta,p)$ defined in (\ref{eq:effective_sparsity}), belongs to the interval $[1,J-1)$.
\item[]
\item[\textbf{A5.}] For $j^{*}$ defined in (\ref{eq:magic}), the corresponding matrix $Z^{(j^{*})}$ satisfies the $4s_{j^{*}}$-RIP with parameter $\delta_{4s_{j^{*}}}(Z^{(j^{*})})<0.22665$.
\end{itemize}

\begin{remark}
\textcolor{black}{Assumption~4 implicitly requires \( s(\theta, p) \) to be sufficiently small. In view of~\eqref{eq:effective_sparsity}, this means that the gradients should be \( p \)-compressible (Assumption A3), with \( p \) sufficiently close to zero. Moreover, by Lemma 3, Assumption A5 holds with high probability when \( b \) in Step 0 is chosen sufficiently large. This fact is clarified in Corollary~4.
}
\end{remark}

\begin{lemma}
\label{lem:mais1}
Suppose that A1, A4 and A5 hold. If $\|\nabla f(x_{k})\|>\epsilon$, then
\begin{equation}
0\leq j_{k}\leq \left\lceil \max\left\{1,\log_{2}\left(\dfrac{s(\theta,p)}{s_{0}}\right),\log_{2}\left(\dfrac{(\theta+1)^{2}}{(1-2\theta)}\dfrac{L}{\sigma_{0}}\right)\right\} \right\rceil.
\label{eq:mais1}
\end{equation}
\end{lemma}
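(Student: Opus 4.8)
The plan is to follow the same contradiction-free template used for Lemma~\ref{lem:plus1}, but replacing the finite-difference estimate of Lemma~\ref{lem:2.4} by the compressed-sensing estimate analyzed in Corollary~\ref{cor:2.1}. Recall that, by construction of Steps~2.1--2.5 of Algorithm~1, $j_{k}$ is the smallest non-negative integer for which the acceptance condition (\ref{eq:decrease}) holds at $x_{k}$; in particular $j_{k}\geq 0$ is automatic. Hence it suffices to show that (\ref{eq:decrease}) is already satisfied at the inner index $j=j^{*}$, where $j^{*}$ is the integer defined in (\ref{eq:magic}): this forces $j_{k}\leq j^{*}$, which is exactly (\ref{eq:mais1}).

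First I would check that, when the inner loop reaches $j=j^{*}$, Algorithm~1 enters the CoSaMP branch (Steps~2.2--2.3) rather than the finite-difference branch (Step~2.4). By Assumption~A4 we have $j^{*}\in[1,J-1)$, hence $j^{*}<J$, and since $J=\lceil\log_{2}(n/(bs_{0}\ln n))\rceil$ is the smallest positive integer with $m_{J}=\lceil bs_{J}\ln(n)\rceil\geq n$, it follows that $m_{j^{*}}<n$; thus the test in Step~2.1 fails and $g_{k}^{(j^{*})}$ is obtained by running $\lceil\log(\theta/4)/\log(0.5)\rceil$ iterations of CoSaMP on problem (\ref{eq:sparse_recovery}) with $s=s_{j^{*}}=2^{j^{*}}s_{0}$, $h=h_{j^{*}}=\theta\epsilon/(11n\sigma_{j^{*}})$, sensing matrix $Z^{(j^{*})}$, and starting point $v^{0}=0$.

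Next I would verify the hypotheses of Corollary~\ref{cor:2.1} at $j=j^{*}$: since $j^{*}\geq\log_{2}(s(\theta,p)/s_{0})$ we get $s_{j^{*}}\geq s(\theta,p)$, so (\ref{eq:effective_sparsity}) holds; the choice $h_{j^{*}}=\theta\epsilon/(11n\sigma_{j^{*}})$ satisfies (\ref{eq:2.9}); the prescribed number of CoSaMP iterations matches (\ref{eq:2.7}); and Assumption~A5 supplies precisely the $4s_{j^{*}}$-RIP with $\delta_{4s_{j^{*}}}(Z^{(j^{*})})<0.22665$ needed to invoke Lemma~\ref{lem:2.2}. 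Therefore Corollary~\ref{cor:2.1} applies and $g:=g_{k}^{(j^{*})}$ satisfies (\ref{eq:2.10}), i.e.\ (\ref{eq:2.11}) with $x=x_{k}$. Finally, $j^{*}\geq\log_{2}\!\big((\theta+1)^{2}L/((1-2\theta)\sigma_{0})\big)$ gives $\sigma_{j^{*}}=2^{j^{*}}\sigma_{0}\geq(\theta+1)^{2}L/(1-2\theta)$, so (\ref{eq:2.12}) holds with $\sigma=\sigma_{j^{*}}$; combining this with $\|\nabla f(x_{k})\|>\epsilon$ and (\ref{eq:2.11}) in the Descent Lemma (Lemma~\ref{lem:2.3}) applied to $x=x_{k}$, $g=g_{k}^{(j^{*})}$, $x^{+}=x_{k,j^{*}}^{+}$, $\sigma=\sigma_{j^{*}}$ yields $f(x_{k})-f(x_{k,j^{*}}^{+})\geq\epsilon^{2}/(2\sigma_{j^{*}})$, which is exactly (\ref{eq:decrease}) at $j=j^{*}$. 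Hence $j_{k}\leq j^{*}$.

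No individual step is technically difficult; the only delicate point is the bookkeeping that places $j^{*}$ inside the CoSaMP regime — which is precisely what the requirement $j^{*}\in[1,J-1)$ in A4 is designed to guarantee — together with the fact that A5 is stated exactly at the index $j^{*}$ so that the RIP hypothesis of Corollary~\ref{cor:2.1} is available there. Without these two structural assumptions the chain of implications would break, so the main care is in invoking them at the right index rather than in any computation.
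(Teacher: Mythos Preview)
Your proposal is correct and follows essentially the same approach as the paper's proof: both argue directly that the acceptance condition (\ref{eq:decrease}) is met at the inner index $j^{*}$, using A4 to ensure that $j^{*}$ lies in the CoSaMP branch, A5 to supply the RIP needed for Corollary~\ref{cor:2.1}, and the bound $\sigma_{j^{*}}\geq(\theta+1)^{2}L/(1-2\theta)$ both to verify (\ref{eq:2.9}) (via $\sigma_{j^{*}}\geq L$) and to invoke the Descent Lemma. The only cosmetic difference is that you state ``$h_{j^{*}}$ satisfies (\ref{eq:2.9})'' before deriving the $\sigma_{j^{*}}$ bound that justifies it, whereas the paper establishes $\sigma_{j^{*}}\geq(\theta+1)^{2}L/(1-2\theta)$ first and then deduces $h_{j^{*}}<\theta\epsilon/(11nL)$; since you do supply that bound in the same paragraph, the argument is complete.
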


\begin{proof}
Let us show that, for $j^{*}$ defined in (\ref{eq:magic}), the corresponding point $x_{k,j^{*}}^{+}$ satisfies (\ref{eq:decrease}). Indeed, by A4 and the definition of $J$, it follows that $g_{k}^{(j^{*})}$ was obtained at Step 2.3 of Algorithm 1, using the sensing matrix $Z^{(j^{*})}$, and the vector $y_{k}^{(j^{*})}$ obtained with
\begin{equation}
h_{j^{*}}=\dfrac{\theta}{11 n\sigma_{j^{*}}}.
 \label{eq:mais3}   
\end{equation}
In view of (\ref{eq:magic}) and Assumption A5, the matrix $Z^{(j^{*})}$ satisfies the $4s_{j^{*}}$-RIP with 
\begin{equation}
s_{j^{*}}=2^{j^{*}}s_{0}\geq s(\theta,p).
\label{eq:mais4}
\end{equation}
Moreover, (\ref{eq:magic}) also implies that 
\begin{equation}
\sigma_{j^{*}}=2^{j^{*}}\sigma_{0}\geq\dfrac{(\theta+1)^{2}}{(1-2\theta)}L.
\label{eq:mais5}
\end{equation}
Combining (\ref{eq:mais3}) and (\ref{eq:mais5}), we have that
\begin{equation}
h_{j^{*}}<\dfrac{\theta}{11 n L}\epsilon.
\label{eq:mais6}
\end{equation}
Then, it follows from (\ref{eq:mais4}), (\ref{eq:mais6}) and Corollary \ref{cor:2.1} that (\ref{eq:2.11}) holds for $x=x_{k}$ and $g=g_{k}^{(j^{*})}$. Thus, by (\ref{eq:mais5}) and Lemma \ref{lem:2.3} we would have 
\begin{equation*}
f(x_{k})-f(x_{k,j^{*}}^{+})\geq\dfrac{1}{2\sigma_{j^{*}}}\epsilon^{2}.
\end{equation*}
Since $j_{k}$ is the smallest non-negative integer $j$ for which (\ref{eq:decrease}) holds, we conclude that $j_{k}\leq j^{*}$, and so (\ref{eq:mais1}) is true.\qed
\end{proof}
Lemma \ref{lem:mais1} implies the following upper bounds on $s_{k}$ and $\sigma_{k}$.
\begin{lemma}
Suppose that A1, A4 and A5 hold. If $\|\nabla f(x_{k})\|>\epsilon$ then
\begin{equation*}
s_{k}\leq\max\left\{2s_{0},2s(\theta,p),\left[\dfrac{2(\theta+1)^{2}L}{(1-2\theta)\sigma_{0}}\right]s_{0}\right\}.
\end{equation*}
and
\begin{equation*}
\sigma_{k}\leq\max\left\{2\sigma_{0},\left[\dfrac{2 s(\theta,p)}{s_{0}}\right]\sigma_{0},\left[\dfrac{2(\theta+1)^{2}}{(1-2\theta)}\right]L\right\}.
\end{equation*}
\label{lem:mais2}
\end{lemma}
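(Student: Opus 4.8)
The plan is to mimic the structure of the proof of Lemma~\ref{lem:plus2}, simply substituting the sharper bound on $j_k$ provided by Lemma~\ref{lem:mais1} in place of the one from Lemma~\ref{lem:plus1}. Write $j^{*}$ for the quantity on the right-hand side of \eqref{eq:mais1}, so that $0\leq j_k\leq j^{*}$ whenever $\|\nabla f(x_k)\|>\epsilon$; this is exactly what Lemma~\ref{lem:mais1} gives. Since by construction $s_k=s_{j_k}=2^{j_k}s_0$ and $\sigma_k=\sigma_{j_k}=2^{j_k}\sigma_0$, the bound $j_k\leq j^{*}$ immediately yields $s_k\leq 2^{j^{*}}s_0$ and $\sigma_k\leq 2^{j^{*}}\sigma_0$. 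It only remains to bound $2^{j^{*}}$.

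Next I would unwind the ceiling and the maximum in the definition of $j^{*}$. Using $\lceil t\rceil < t+1$ we get $2^{j^{*}} < 2\cdot 2^{\max\{1,\,\log_2(s(\theta,p)/s_0),\,\log_2((\theta+1)^2 L/((1-2\theta)\sigma_0))\}}$, and since $2^{\max\{a,b,c\}}=\max\{2^a,2^b,2^c\}$ this equals $2\max\{2,\,s(\theta,p)/s_0,\,(\theta+1)^2 L/((1-2\theta)\sigma_0)\}$. Multiplying through by $s_0$ gives $s_k\leq 2^{j^{*}}s_0\leq \max\{4s_0,\,2s(\theta,p),\,2(\theta+1)^2 L s_0/((1-2\theta)\sigma_0)\}$, and multiplying through by $\sigma_0$ gives $\sigma_k\leq 2^{j^{*}}\sigma_0\leq \max\{4\sigma_0,\,2s(\theta,p)\sigma_0/s_0,\,2(\theta+1)^2 L/(1-2\theta)\}$. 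These are (slightly weaker versions of, but dominated by, and in fact essentially) the claimed bounds; one gets the stated constants by being a touch more careful — e.g. noting that when the first branch of the maximum is active one actually has $j^{*}=1$, hence $2^{j^{*}}s_0=2s_0$ rather than $4s_0$, and similarly when the third branch is active the ceiling of a quantity that is $\geq 1$ combined with the floor/ceiling interplay yields the factor $2(\theta+1)^2 L/((1-2\theta)\sigma_0)$ rather than twice that. So the clean way is: from $j_k\le j^{*}$ argue directly on $2^{j_k}$ using $2^{\lceil t\rceil}\le 2^{t+1}$ inside each branch, but track the three cases separately so the constant $2$ (not $4$) appears.

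Concretely, the key steps in order are: (i) invoke Lemma~\ref{lem:mais1} to get $j_k\le j^{*}$; (ii) use $s_k=2^{j_k}s_0$, $\sigma_k=2^{j_k}\sigma_0$ together with monotonicity of $t\mapsto 2^t$; (iii) split into the three cases according to which term achieves the maximum inside the $\lceil\cdot\rceil$ in \eqref{eq:magic}, and in each case bound $2^{j^{*}}$ by $2$ times the corresponding argument, so that $2^{j^{*}}s_0\in\{2s_0,\,2s(\theta,p),\,2(\theta+1)^2 L s_0/((1-2\theta)\sigma_0)\}$ and likewise for $\sigma_k$; (iv) take the maximum over the three cases to obtain the stated inequalities. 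I do not anticipate any real obstacle here — this is a bookkeeping argument, entirely parallel to Lemma~\ref{lem:plus2}. The only point requiring mild care is the treatment of the ceiling function so that the constants match the statement exactly (making sure the first case produces $2s_0$ / $2\sigma_0$ and not $4s_0$ / $4\sigma_0$), which is handled by noting $\lceil\max\{1,\dots\}\rceil=1$ precisely when the max equals $1$, in which case $j^{*}=1$.
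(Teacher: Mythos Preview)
Your proposal is correct and matches the paper's approach: the paper simply states that Lemma~\ref{lem:mais2} follows from Lemma~\ref{lem:mais1} without giving any details, so what you have written is precisely the bookkeeping that the paper omits. Your case analysis on which branch of the maximum is active, together with $2^{\lceil t\rceil}\le 2\cdot 2^{t}$ in the second and third branches and $j^{*}=1$ in the first, yields exactly the stated constants.
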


\begin{theorem}
\label{thm:mais1}
Suppose that A1-A5 hold and let $\left\{x_{k}\right\}_{k=0}^{T(\epsilon)}$ be generated by Algorithm 1, where $T(\epsilon)$ is defined by (\ref{eq:hitting}). Then
\begin{equation}
T(\epsilon)\leq 2\max\left\{2\sigma_{0},\left[\dfrac{2 s(\theta,p)}{ s_{0}}\right]\sigma_{0},\left[\dfrac{2(\theta+1)^{2}}{(1-2\theta)}\right]L\right\}(f(x_{0})-f_{low})\epsilon^{-2}.
\label{eq:mais7}
\end{equation}
\end{theorem}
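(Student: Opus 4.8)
The plan is to mirror the proof of Theorem~\ref{thm:plus1} exactly, but replacing the use of Lemma~\ref{lem:plus2} (the $n$-dependent bound on $\sigma_k$) with Lemma~\ref{lem:mais2} (the $s(\theta,p)$-dependent bound that holds under A4 and A5). First I would note that Step 2.5 of Algorithm~1 guarantees the sufficient decrease condition
\begin{equation*}
f(x_{k})-f(x_{k+1})\geq\dfrac{1}{2\sigma_{k}}\epsilon^{2},\quad k=0,\ldots,T(\epsilon)-1,
\end{equation*}
since by definition of $T(\epsilon)$ we have $\|\nabla f(x_k)\|>\epsilon$ for every such $k$, so that Lemma~\ref{lem:mais1} applies and $x_{k+1}=x_{k,j_k}^{+}$ is indeed accepted.

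Next I would sum these inequalities over $k=0,\ldots,T(\epsilon)-1$, obtaining a telescoping sum on the left whose value is $f(x_0)-f(x_{T(\epsilon)})$, which by A2 is at most $f(x_0)-f_{\mathrm{low}}$. On the right, I would bound each $1/\sigma_k$ from below by replacing $\sigma_k$ with its uniform upper bound from Lemma~\ref{lem:mais2}, namely
\begin{equation*}
M:=\max\left\{2\sigma_{0},\left[\dfrac{2 s(\theta,p)}{s_{0}}\right]\sigma_{0},\left[\dfrac{2(\theta+1)^{2}}{(1-2\theta)}\right]L\right\},
\end{equation*}
which is valid precisely because $\|\nabla f(x_k)\|>\epsilon$ for $k<T(\epsilon)$. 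This gives $f(x_0)-f_{\mathrm{low}}\geq \frac{T(\epsilon)}{2M}\epsilon^2$, and rearranging yields the claimed bound \eqref{eq:mais7}.

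This proof is essentially routine once Lemma~\ref{lem:mais2} is in hand; the only subtlety to check is that Lemma~\ref{lem:mais1} (hence Lemma~\ref{lem:mais2}) is applicable at every iteration $k<T(\epsilon)$, which requires A4 and A5 together with $\|\nabla f(x_k)\|>\epsilon$ — and the latter holds by the very definition of $T(\epsilon)$ in \eqref{eq:hitting}. The main (modest) obstacle is therefore bookkeeping: making sure the case $T(\epsilon)=0$ is handled trivially (the sum is empty and the bound holds vacuously), and that the transition from the per-iteration decrease to the telescoped inequality correctly uses A2 to discard the final term $f(x_{T(\epsilon)})\geq f_{\mathrm{low}}$.
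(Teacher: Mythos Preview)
Your proposal is correct and follows essentially the same route as the paper's proof: the paper also telescopes the Step~2.5 decrease inequalities, uses A2 to bound $f(x_0)-f(x_{T(\epsilon)})$ by $f(x_0)-f_{\mathrm{low}}$, and then invokes the second inequality of Lemma~\ref{lem:mais2} to replace $\max_k\sigma_k$ by the stated constant. Your added remarks on the applicability of Lemma~\ref{lem:mais1}/\ref{lem:mais2} for $k<T(\epsilon)$ and on the vacuous case $T(\epsilon)=0$ are correct bookkeeping that the paper leaves implicit.
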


\begin{proof}
As in the proof of Theorem \ref{thm:plus1}, we have
\begin{equation}
f(x_{0})-f_{low}\geq\dfrac{T(\epsilon)}{2\max_{k=0,\ldots,T(\epsilon)-1}\left\{\sigma_{k}\right\}}
\label{eq:mais8}
\end{equation}
Thus, combining (\ref{eq:mais8}) and the second inequality in Lemma \ref{lem:mais2} we see that (\ref{eq:mais7}) is true.\qed
\end{proof}

\begin{corollary}
Suppose that A1-A5 hold and denote by $FE_{T(\epsilon)}$ the total number of function evaluations performed by Algorithm 1 up to the $(T(\epsilon)-1)$-st iteration. Then
\small
\begin{eqnarray}
FE_{T(\epsilon)}&\leq & 4b\left[1+\max\left\{1,\log_{2}\left(\dfrac{s(\theta,p)}{s_{0}}\right),\log_{2}\left(\dfrac{(\theta+1)^{2}}{(1-2\theta)}\dfrac{L}{\sigma_{0}}\right)\right\}\right]\times\nonumber\\
& & \max\left\{2s_{0},2s(\theta,p),\left[\dfrac{2(\theta+1)^{2}L}{(1-2\theta)\sigma_{0}}\right]s_{0}\right\}\ln(n)\times\nonumber\\
& & \max\left\{2\sigma_{0},\left[\dfrac{2 s(\theta,p)}{ s_{0}}\right]\sigma_{0},\left[\dfrac{2(\theta+1)^{2}}{(1-2\theta)}\right]L\right\}(f(x_{0})-f_{low})\epsilon^{-2}.
\label{eq:mais9}
\end{eqnarray}
\label{cor:mais1}
\end{corollary}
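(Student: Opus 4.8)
The plan is to follow the same two-step structure as in the proof of Corollary \ref{cor:plus1}: first bound the number of function evaluations per iteration in terms of $j_k$, then multiply by the iteration bound $T(\epsilon)$ from Theorem \ref{thm:mais1}. The key difference from the non-compressible case is that now, at iteration $k$, each inner loop $j \in \{0,\ldots,j_k\}$ enters Step 2.2--2.3 rather than Step 2.4 (this is guaranteed by Assumption A4, since $j_k \le j^* < J-1$ by Lemma \ref{lem:mais1}, so $m_j < n$ for every such $j$). Hence the cost of computing $g_k^{(j)}$ is $m_j = \lceil b s_j \ln(n) \rceil$ function evaluations rather than $n+1$, and checking (\ref{eq:decrease}) costs one more. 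Summing over $j$, the total cost at iteration $k$ is at most $\sum_{j=0}^{j_k}(m_j + 1)$.

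Next I would bound this inner sum. Since $s_j = 2^j s_0$ and $m_j = \lceil b s_j \ln(n)\rceil \le b s_{j_k}\ln(n) + 1 \le 2 b s_{j_k}\ln(n)$ (using $b \ge 1$, $s_{j_k}\ln(n)\ge 1$), and there are $j_k + 1$ terms, we get $\sum_{j=0}^{j_k}(m_j+1) \le (j_k+1)(2bs_{j_k}\ln(n) + 1) \le (j_k+1)\cdot 2b s_{j_k}\ln(n) + (j_k+1)$; absorbing constants this is $\mathcal{O}\big((j_k+1)\, b\, s_{j_k}\ln(n)\big)$. I would then invoke Lemma \ref{lem:mais1} to bound $j_k+1 \le 1 + \lceil \max\{1, \log_2(s(\theta,p)/s_0), \log_2((\theta+1)^2 L/((1-2\theta)\sigma_0))\}\rceil$, and the first inequality of Lemma \ref{lem:mais2} to bound $s_{j_k} = s_k \le \max\{2s_0, 2s(\theta,p), [2(\theta+1)^2 L/((1-2\theta)\sigma_0)]s_0\}$. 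This yields
\[
FE_{T(\epsilon)} \le \sum_{k=0}^{T(\epsilon)-1}\sum_{j=0}^{j_k}(m_j+1) \le 2b\Big[1 + \max\{\ldots\}\Big]\,\max\{2s_0,\ldots\}\,\ln(n)\; T(\epsilon),
\]
and substituting the bound on $T(\epsilon)$ from Theorem \ref{thm:mais1} (which contributes the third max-factor and $(f(x_0)-f_{low})\epsilon^{-2}$, together with the factor $2$) gives exactly (\ref{eq:mais9}), with the displayed leading constant $4b$.

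The main obstacle — really just a bookkeeping point rather than a genuine difficulty — is tracking the ceilings carefully: one must use $m_j = \lceil b s_j \ln(n)\rceil \le 2 b s_j \ln(n)$, which requires $b s_j \ln(n) \ge 1$ (valid since $b\ge 1$, $s_j \ge s_0 \ge 1$, $\ln(n)\ge 1$ for $n \ge 3$), and one must make sure the "$+1$" from checking (\ref{eq:decrease}) and the "$+1$" inside each $m_j$ are safely absorbed into the constant so that $4b$ rather than some larger multiple appears. The one structural subtlety worth stating explicitly is the verification that Step 2.4 is never reached for any $j \le j_k$ at any iteration $k < T(\epsilon)$: this is where Assumption A4 (namely $j^* < J-1$) is essential, and it is already supplied by Lemma \ref{lem:mais1}, so it may simply be cited.
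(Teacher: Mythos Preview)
Your proposal is correct and follows essentially the same route as the paper: bound the per-iteration cost by $\sum_{j=0}^{j_k}(m_j+\text{const})\le 2b(1+j_k)s_{j_k}\ln(n)$ via Lemma~\ref{lem:mais1} and Lemma~\ref{lem:mais2}, then multiply by the $T(\epsilon)$ bound from Theorem~\ref{thm:mais1}. The only cosmetic discrepancy is that the paper counts $m_j+1$ evaluations for $g_k^{(j)}$ (including $f(x_k)$) plus one for the test, giving $\sum_{j=0}^{j_k}(m_j+2)$ rather than your $\sum_{j=0}^{j_k}(m_j+1)$, but this is absorbed identically into the constant.
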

\normalsize

\begin{proof}
Let us consider the $k$th iteration of Algorithm 1. By Lemma \ref{lem:mais1} and Assumption A4, we have $m_{j}=b s_{j}\ln(n)<n$ for $j=0,\ldots,j_{k}$. Then, for $j\in\left\{0,\ldots,j_{k}\right\}$, the computation of $g_{k}^{(j)}$ requires at most $(m_{j}+1)$ function evaluations. Moreover, checking the acceptance condition (\ref{eq:decrease}) requires one additional function evaluation. Thus, the total number of function evaluations at the $k$th iteration is bounded from above by
\begin{eqnarray*}
\sum_{j=0}^{j_{k}}(m_{j}+2)&\leq &2\sum_{j=0}^{j_{k}}b s_{j}\ln(n)\leq 2b(1+j_{k}) s_{j_{k}}\ln(n)\\
                     &\leq &2b\left[1+\left\lceil\max\left\{1,\log_{2}\left(\dfrac{s(\theta,p)}{s_{0}}\right),\log_{2}\left(\dfrac{(\theta+1)^{2}}{(1-2\theta)}\dfrac{L}{\sigma_{0}}\right)\right\}\right\rceil\right]\times\nonumber\\
& & \max\left\{2s_{0},2s(\theta,p),\left[\dfrac{2(\theta+1)^{2}L}{(1-2\theta)\sigma_{0}}\right]s_{0}\right\}\ln(n).
\end{eqnarray*}
Consequently
\begin{eqnarray}
FE_{T(\epsilon)}&\leq & 2b\left[1+\left\lceil\max\left\{1,\log_{2}\left(\dfrac{ s(\theta,p)}{s_{0}}\right),\log_{2}\left(\dfrac{(\theta+1)^{2}}{(1-2\theta)}\dfrac{L}{\sigma_{0}}\right)\right\}\right\rceil\right]\times\nonumber\\
& & \max\left\{2s_{0},2s(\theta,p),\left[\dfrac{2(\theta+1)^{2}L}{(1-2\theta)\sigma_{0}}\right]s_{0}\right\}\ln(n)T(\epsilon).
\label{eq:mais10}
\end{eqnarray}
\normalsize
Finally, combining (\ref{eq:mais10}) and Theorem \ref{thm:mais1} we conclude that (\ref{eq:mais9}) is true.\qed
\end{proof}

\noindent When $s_{0}\leq s(\theta,p)$, it follows from Corollary \ref{cor:mais1} that 
\begin{equation*}
FE_{T(\epsilon)}\leq\mathcal{O}\left(\left[\dfrac{s(\theta,p)}{s_{0}}\right]\log_{2}\left(\dfrac{s(\theta,p)}{s_{0}}\right)s(\theta,p)\ln(n)\epsilon^{-2}\right).
\end{equation*}
Assuming further that $s_0 = \mathcal{O}(s(\theta,p))$, i.e., the initial sparsity $s_0$ is not too far from $s(\theta,p)$, this bound reduces to 
\begin{equation}
FE_{T(\epsilon)}\leq\mathcal{O}\left(s(\theta,p)\ln(n)\epsilon^{-2}\right).
\label{eq:revision3}
\end{equation}
\textcolor{black}{
\begin{corollary}
Suppose Assumptions~A1--A4 hold, and consider the functions \( c_1(\cdot) \) and \( \gamma(\cdot) \) defined in~\eqref{eq:revision1} and~\eqref{eq:revision2}, respectively. If \( b \geq c_1(0.22664) \), then, with probability at least
\begin{equation}
1 - 2 e^{-\gamma(0.22664) m_{j^{*}}},
\label{eq:revision4}
\end{equation}
Algorithm~1 requires no more than \(\mathcal{O}\bigl(s(\theta,p) \ln(n) \epsilon^{-2}\bigr)\) function evaluations to find an \(\epsilon\)-approximate stationary point of \( f(\cdot) \).
\end{corollary}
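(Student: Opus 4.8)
The plan is to show that the hypotheses (A1--A4 together with $b \ge c_{1}(0.22664)$) force Assumption~A5 to hold with probability at least~\eqref{eq:revision4}, and then to invoke Corollary~\ref{cor:mais1}, in the sharpened form~\eqref{eq:revision3}, on the event where A5 holds. First I would record that, since A1--A4 are assumed (in particular A3), the index $j^{*}$ in~\eqref{eq:magic} is well defined and, by A4, $1 \le j^{*} < J-1$. Because $j \mapsto m_{j} = \lceil b s_{j}\ln(n)\rceil$ is nondecreasing and $J$ is the smallest integer with $m_{J} \ge n$, this yields $m_{j^{*}} < n$; hence Lemma~\ref{lem:new} applies to the matrix $Z^{(j^{*})}$ with sparsity parameter $s = s_{j^{*}} = 2^{j^{*}}s_{0}$ built from the $n$ i.i.d.\ Rademacher vectors drawn once in Step~0 (a sub-collection of i.i.d.\ Rademacher vectors being again i.i.d.\ Rademacher). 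I would also note that in the regime of interest $4 s_{j^{*}} < n$, so that $\ln(n/(4s_{j^{*}})) > 0$ and $c_{1}(\cdot)$, $\gamma(\cdot)$ (for this $s$) are well defined.

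Next I would fix an arbitrary $\delta \in (0.22664, 0.22665)$ and apply Lemma~\ref{lem:new}. The key observations are that $c_{1}(\cdot)$ is strictly decreasing and $\gamma(\cdot)$ strictly increasing on $(0,1)$: the numerator $4\bigl(1 + (1+\ln(12/\delta))/\ln(n/4s)\bigr)$ in~\eqref{eq:revision1} is decreasing in $\delta$ (since $\ln(12/\delta)$ is), while $c_{0}(\delta/2)$ is increasing, because $c_{0}'(a) = a(1-a)/2 > 0$ for $a \in (0,1)$ and here $a = \delta/2 \in (0,1/2)$; the same two facts give the monotonicity of $\gamma$. Consequently $b \ge c_{1}(0.22664) > c_{1}(\delta)$, so~\eqref{eq:revision1} holds for this $\delta$ and $s = s_{j^{*}}$, and Lemma~\ref{lem:new} gives that $Z^{(j^{*})}$ satisfies the $4s_{j^{*}}$-RIP with $\delta_{4s_{j^{*}}}(Z^{(j^{*})}) = \delta < 0.22665$ with probability at least $1 - 2e^{-\gamma(\delta)m_{j^{*}}} \ge 1 - 2e^{-\gamma(0.22664)m_{j^{*}}}$. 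Thus on an event $\mathcal{E}$ of probability at least the quantity in~\eqref{eq:revision4}, Assumption~A5 is satisfied.

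On $\mathcal{E}$, all of A1--A5 hold, so Lemma~\ref{lem:mais1}, Theorem~\ref{thm:mais1} and Corollary~\ref{cor:mais1} apply: the inner loop terminates at every $k$, $T(\epsilon)$ is finite, and the bound~\eqref{eq:mais9} holds. Treating $\theta, p, L, \sigma_{0}, s_{0}, b$ and $f(x_{0}) - f_{\mathrm{low}}$ as constants, so that $s(\theta,p)$ is a constant, the right-hand side of~\eqref{eq:mais9} is $\mathcal{O}\bigl(s(\theta,p)\ln(n)\epsilon^{-2}\bigr)$, which is precisely~\eqref{eq:revision3}. Hence, with probability at least the quantity in~\eqref{eq:revision4}, Algorithm~1 reaches an $\epsilon$-approximate stationary point within $\mathcal{O}\bigl(s(\theta,p)\ln(n)\epsilon^{-2}\bigr)$ function evaluations.

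I expect the only delicate point to be the probabilistic bookkeeping in the second paragraph: Lemma~\ref{lem:new} needs the strict inequality $b > c_{1}(\delta)$ and produces a failure probability controlled by $\gamma(\delta)$, whereas the corollary is stated with the non-strict $b \ge c_{1}(0.22664)$ and exponent $\gamma(0.22664)$; reconciling the two requires the monotonicity of $c_{1}$ and $\gamma$ and the slack gained by taking $\delta$ strictly inside $(0.22664, 0.22665)$ (in the borderline case $b = c_{1}(0.22664)$ one has $\gamma(0.22664) = 0$ and the stated bound is vacuous, but the argument still yields a genuine high-probability guarantee for each $\delta > 0.22664$). Everything else is a direct chaining of results already established, together with the routine facts $m_{j^{*}} < n$ and $4 s_{j^{*}} < n$ coming from A4 and Step~0.
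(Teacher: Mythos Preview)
Your proposal is correct and follows the same route as the paper: show that A5 holds with the stated probability via Lemma~\ref{lem:new}, and then invoke Corollary~\ref{cor:mais1}. The paper's own argument is extremely terse---it simply asserts that $b \ge c_{1}(0.22664)$ together with Lemma~\ref{lem:new} yields A5 with the stated probability, without addressing the strict inequality $b > c_{1}(\delta)$ required there---whereas you fill this in carefully by sliding to an auxiliary $\delta \in (0.22664,0.22665)$ and using the monotonicity of $c_{1}$ and $\gamma$; this is a genuine improvement in rigor over the published proof rather than a different approach.
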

}

\begin{proof}
\textcolor{black}{If Assumption~A5 also holds, then the stated complexity bound follows from Corollary~\ref{cor:mais1}. Since \( b \geq c_1(0.22664) \), it follows from Lemma~\ref{lem:new} that, with probability at least~\eqref{eq:revision4}, the sensing matrix \( Z^{(j^{*})} \) satisfies the \( 4s_{j^{*}} \)-RIP with \( \delta_{4s_{j^{*}}}(Z^{(j^{*})}) < 0.22665 \). This implies that Assumption~A5 holds with probability at least (\ref{eq:revision4}). Therefore, the stated complexity bound holds with the same probability.}\qed
\end{proof}

\section{Illustrative Numerical Experiments}
In the following experiments, we compare {\tt ZORO-FA} to a representative sample of {\em deterministic} DFO algorithms, namely {\tt Nelder-Mead} \cite{NM}, {\tt DFQRM} \cite{grapiglia2024quadratic}, \textcolor{black}{the zeroth-order variant \cite{kozak2023} of stochastic subspace descent \cite{Kozak} (henceforth: {\tt SSD}), direct search in reduces spaces \cite{Roberts} (henceforth: {\tt DS-RS}),} and {\tt ZORO} \textcolor{black}{and {\tt adaZORO}} \cite{cai2022zeroth}. We avoid methods which incorporate curvature, or ``second-order'' information such as {\tt CARS} \cite{kim2021curvature} or the derivative-free variation of {\tt L-BFGS} studied in \cite{shi2023numerical}, noting that such functionality could be added to {\tt ZORO-FA} fairly easily. \textcolor{black}{We also do not compare against model-based approaches which use a quadratic model, such as {\tt DFBGN} \cite{Cartis} or  {\tt NEWUOA} \cite{powell2006newuoa,Zhang_2023}, as these offer different trade-offs between run-time, query complexity, and solution quality.}

\subsection{Sparse Gradient Functions}
\label{sec:Sparse_Benchmarking}
We test {\tt ZORO-FA} on \textcolor{black}{two} challenging test functions known to exhibit gradient sparsity, namely the max-s-squared function ($f_{ms}$) \cite{cai2022one,slavin2022adapting} \textcolor{black}{and a variant of Nesterov's `worst function in the world' \cite{Kozak,nesterov2013} ($f_{N,\lambda,s}$). These functions are defined as}
\begin{align*}
    f_{ms}(x) &= \sum_{i=1}^s x_{\pi(i)}^2 \quad \text{ where } |x_{\pi(1)}| \geq |x_{\pi(2)}| \geq \ldots \geq |x_{\pi(n)}|, \\
    f_{N,\lambda,s}(x) &= \frac{\lambda}{8}\left(x_1^2 + \sum_{i=1}^s\left(x_{i} - x_{i+1}\right)^2 + x_s^2  \right) - \frac{\lambda}{4}x_1.
\end{align*}
The function $f_{ms}$ is challenging as, while $\nabla f_{ms}$ is $p$-compressible (in fact, sparse), the location of the largest-in-magnitude entries $\nabla f_{ms}$ changes frequently. Moreover, $\nabla f_{ms}$ is not Lipschitz continuous. \textcolor{black}{The function $f_{N,\lambda,s}$ is a variant of a well-known pathological example used to prove tightness of convergence rate bounds in convex optimization. Note that $\nabla f_{N,\lambda,s}$ is Lipschitz continuous with Lipschitz constant $\lambda/2$.} For both functions, we take $n$, the dimension of the function, to be $1000$ and $s=30$. For $f_{N,\lambda,s}$ we take $\lambda = 8$ as in \cite{Kozak}.

% \paragraph{Functions.} We consider a modified version of the sparse quadratic ($f_{s2}$) considered in \cite{cai2022zeroth}, as well as the . These functions are defined as
% \begin{align*}
%     f_{s2}(x) &= \frac{1}{s}x_{S}^{\top}Bx_S \quad \text{ where } x_S = x(1:s)\\
%     %f_{s4}(x) &= \|Ax_S\|_2^2 + 0.1\|Ax_S\|_3^3 + 0.01\|Ax_S\|_4^4 \quad \text{ where } x_S = x(1:s) \\
%     f_{ms}(x) &= \sum_{i=1}^s x_{\pi(i)}^2 \quad \text{ where } |x_{\pi(1)}| \geq |x_{\pi(2)}| \geq \ldots \geq |x_{\pi(n)}|
% \end{align*}
% where $B \in \mathbb{R}^{s\times s}$ is a random positive definite matrix,% $A \in \mathbb{R}^{s\times s}$ is the all-ones upper triangular matrix, 
% and $\pi(\cdot)$ is the permutation ordering the coordinates of $x$ from largest to smallest in magnitude. $f_{s2}$ differs from the sparse quadratic considered in \cite{cai2022zeroth} as $B$ is not the identity, making the function harder to minimize. $f_{ms}$ 

\paragraph{Parameters.} \textcolor{black}{We used the default parameters given in the code for {\tt DFQRM}, as these worked well. For {\tt DS} we used the parameters given in Section 4 of \cite{Roberts} for the Gaussian polling direction, $r=1$, setting.}\footnote{That is, at each iteration, we query $f$ at $f(x_k + z)$ and $f(x_k - z)$ where $z$ is a Gaussian random vector as this setting appears to work best.}\textcolor{black}{ That is, with notation as in \cite{Roberts}, we take $\alpha_0 = 1.0, \alpha_{\max} = 1000, \gamma_{\mathrm{inc}} = 2, \gamma_{\mathrm{dec}} = 0.5$, and we terminate whenever $\alpha_k < 10^{-6}$. For {\tt SSD} there did not appear to be a `best' choice of subspace dimension in the results of \cite{Kozak}. So, we take this to be equal to $s$ in our experiments.}\footnote{We experimented with other values, and did not see significantly different performance.}. \textcolor{black}{Three algorithms require a step-size parameter: {\tt ZORO}, {\tt adaZORO}, and {\tt SSD}, which should theoretically be equal to or proportional to the inverse of the Lipschitz constant of the objective function. In order to guarantee convergence for both $f_{ms}$ and $f_{N,\lambda, s}$, we set the step size to the more pessimistic value\footnote{Although $f_{ms}$ is not everwhere differentiable, it has a Lipshitz constant of $2$ wherever it is differentiable.} of $1/\lambda$. Finally, both {\tt ZORO} and {\tt adaZORO} are given the exact sparsity. To make the setting more challenging for {\tt ZORO-FA}, we deliberately provide it with an underestimate of the true sparsity as $s_0$, namely $20$. See Table~\ref{table: params_sparse_benchmarking} for the values of the major parameters used.} In all trials $x_0$ is a random draw from the multivariate normal distribution with mean $0$ and covariance $10I$. Each algorithm is given a query budget of $350(n+1)$.

\begin{table}[!]
\def\ROWCOLOR{black!10!white}
\centering
    \begin{tabular}{l c c c c c c c c}
    \toprule
       Algorithm & $b$ & step size & $s_0\dagger$ & $\epsilon$ & $\theta\ddagger$ &  $\sigma_0\lozenge$ & $\sigma_{\min}^\star$ & samp. radius \\
    \midrule
    \rowcolor{\ROWCOLOR}
    {\tt DFQRM} & - & - & - & $10^{-5}$ & $0$ & $1$ & $0.01$  & - \\
    {\tt ZORO} & $1$ & $1/8$ & $30$ & - & - & - & - & $10^{-4}$ \\
    \rowcolor{\ROWCOLOR}
    {\tt adaZORO} & $0.5$ & $1/8$ & $30$ & - & - & - & - & $10^{-4}$ \\
    {\tt ZORO-FA} & $1$ & - & $20$ & $10^{-5}$ & $0.25$ & $2.5$ & - & - \\
    \rowcolor{\ROWCOLOR}
    {\tt Nelder-Mead} & - & - & - & - & - & - & $0.001$ & -\\
    \textcolor{black}{{\tt SSD}} & - & $\frac{s_0}{8n}$ & $30$ & - & - & - & - & $10^{-4}$ \\
    \rowcolor{\ROWCOLOR}
    \textcolor{black}{{\tt DS-RS}} & - & - & $1$ & - & - & 1 & $10^{-6}$ & - \\
    \bottomrule
    \end{tabular}
    \caption{Parameters used in sparse benchmarking experiments of Section~\ref{sec:Sparse_Benchmarking}. $\dagger$: Here $s_0$ refers to the target sparsity in {\tt ZORO}, the initial target sparsity of {\tt adaZORO} and {\tt ZORO-FA}, and the subspace dimension of {\tt SSD} and {\tt DS}. $\ddagger$: In {\tt DFQRM}, $\theta$ has a slightly different meaning. $\lozenge$: For {\tt DS}, this is the initial step size ($\alpha_0$ in \cite{Roberts}). $\star$: In {\tt Nelder-Mead} this is the minimum simplex size; in {\tt DS-RS} this is threshold for $\alpha_k$ at which we terminate.}
    \label{table: params_sparse_benchmarking}
\end{table}

\paragraph{Results.} Figure~\ref{fig:max-s-results} displays typical trajectories for both functions. We present just a single trajectory as we observed very little variation between runs. \textcolor{black}{As is clear, {\tt ZORO-FA} substantially outperforms all other algorithms. Although {\tt ZORO} and {\tt adaZORO} descend quickly at the beginning---likely due to the fact they are given the exact sparsity as input---their progress quickly stalls out as they are unable to decrease their step size or sampling radius. Finally, we note that all algorithms save {\tt Nelder-Mead} had similar run times---around $10$--$50$ seconds. {\tt Nelder-Mead} typically took two orders of magnitude more time to run.}

% \begin{figure}
%     \centering
%     \includegraphics[width=0.46\textwidth]{} \hfill
%     %\includegraphics[width=0.32\textwidth]{images/box-plots/box-plot-max-s-squared.pdf} \hfill
%     \includegraphics[width=0.46\textwidth]{}
%     \caption{{\bf Left:} Typical trajectories for all five algorithms. Note the $x$-axis displays the cumulative number of queries divided by $n+1$. {\bf Right:} Wall-clock run time.}
%     \label{fig:max-s-results}
% \end{figure}

\begin{figure}
    \centering
    \includegraphics[width=0.48\textwidth]{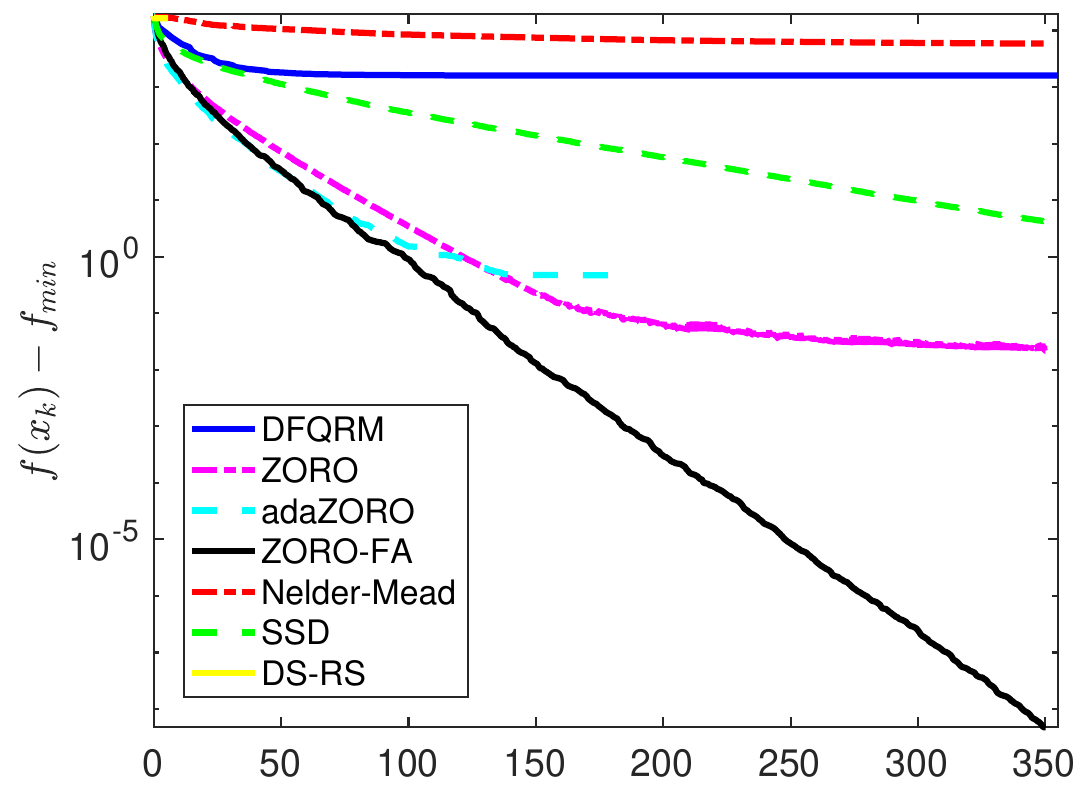}
    \includegraphics[width=0.46\textwidth]{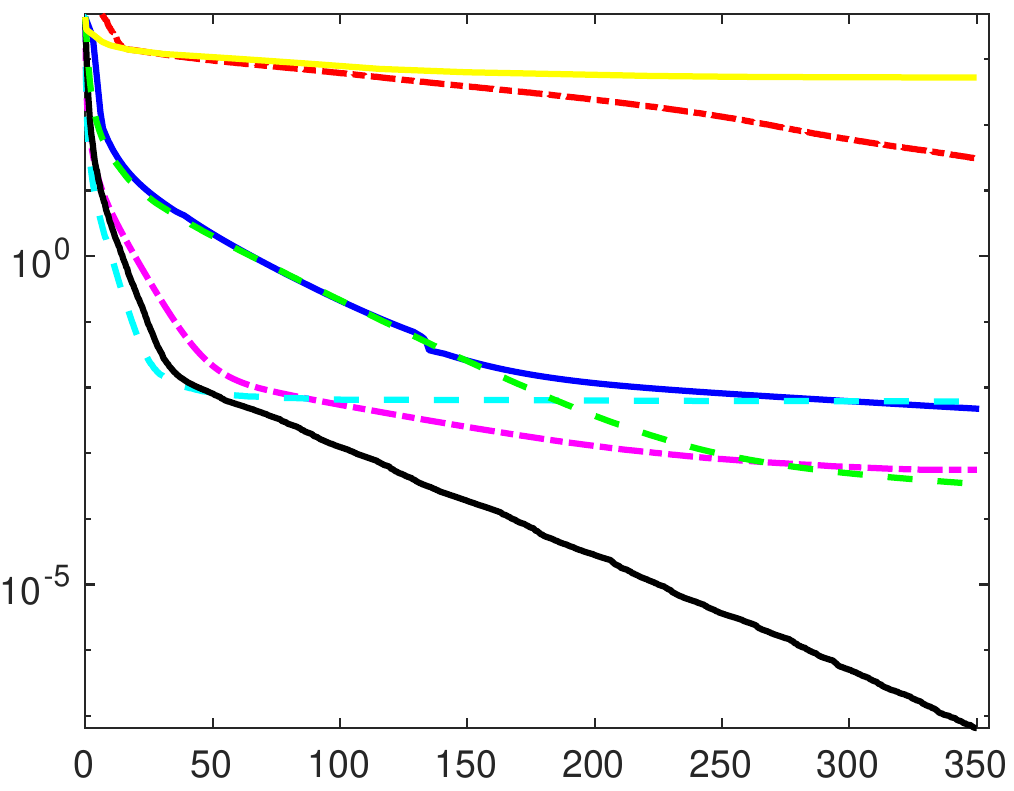}
    \caption{\textcolor{black}{{\bf Left:} Typical trajectories on $f_{ms}$. {\tt adaZORO} fails when it is unable to find a gradient approximation satisfying its acceptance criterion. {\tt DS-RS} fails when $\alpha_k$ fall below $10^{-6}$. {\bf Right:} Typical trajectories on $f_{N,\lambda,s}$. Note the $x$-axis displays the cumulative number of queries divided by $n+1$. }}
    \label{fig:max-s-results}
\end{figure}

% \begin{figure}
%     \centering
%     \includegraphics[width=0.45\textwidth]{images/sparse-experiments/sparse-quadratic.pdf} 
%     %\includegraphics[width=0.32\textwidth]{images/sparse-experiments/sparse-quartic.pdf} \hfill
%     \includegraphics[width=0.45\textwidth]{images/sparse-experiments/max-s-squared.pdf} 
%     \caption{{\bf Left:} Sparse quadratic. %{\bf Center:} Sparse quartic.
%     {\bf Right:} Max-s-squared function. Note the $x$-axis displays the cumulative number of queries divided by $n+1$. Overall, sparsity-exploiting methods ({\tt ZORO}, {\tt adaZORO}, {\tt ZORO-FA}) perform best.}
%     \label{fig:Benchmarking_sparse}
% \end{figure}

% \begin{figure}[h!]
%     \centering
%     \includegraphics[width=0.45\textwidth]{images/box-plots/box-plot-sparse-quadratic.pdf} \hfill
%     %\includegraphics[width=0.32\textwidth]{images/box-plots/box-plot-sparse-quartic.pdf} \hfill
%     \includegraphics[width=0.45\textwidth]{images/box-plots/box-plot-max-s-squared.pdf} \\
%     \includegraphics[width=0.45\textwidth]{images/box-plots/run-time-sparse-quadratic.pdf} \hfill
%     %\includegraphics[width=0.32\textwidth]{images/box-plots/run-time-sparse-quartic.pdf}
%     \includegraphics[width=0.45\textwidth]{images/box-plots/run-time-max-s-squared.pdf}
%     \caption{{\bf Left:} Sparse quadratic. {\bf Right:} Max-s-squared function. {\bf Top:} Optimality gap at final iterate. {\bf Bottom:} Wall-clock run time.}
%     \label{fig:box-plots}
% \end{figure}

\subsection{Mor\'{e}-Garbow-Hillstrom Test Functions}
\label{sec:MGH}

Next, we benchmark {\tt ZORO-FA} on a subset of the well-known Mor\'{e}-Garbow-Hillstrom (MGH) suite of test problems \cite{more1981testing}. \textcolor{black}{For simplicity, we do not include {\tt SSD} or {\tt DS-RS} in this experiment.}

\paragraph{Functions.} We select the 18 problems with variable dimensions $n$ and considered three values of $n$: $n=100$, $n=500$ and $n=1000$. We synthetically increase the number of functions to 36 by toggling the initial point $x_0 = 10^s\tilde{x}_0$, for $s \in \{0,1\}$ and $\tilde{x}_0$ the default initial point associated with the MGH function in question.

\paragraph{Parameters.} We tuned the parameters of each algorithm to the best of our ability but use the same parameter values for all test functions, for all values of $n$, see Table~\ref{table: params_MGH_benchmarking} for precise values. 

\paragraph{Data Profiles.} We present the results as data profiles \cite{more2009benchmarking} for various tolerances $\tau$. Let $\mathcal{P}$ denote the test problem set. A point $(\alpha, y)$ on a curve in a data profile indicates that a fraction of $y|\mathcal{P}|$ problems were solved, to tolerance $\tau$, given $(n+1)\times \alpha$ function evaluations. We say an algorithm solves a problem to tolerance $\tau$ if it finds an iterate $x_k$ satisfying
\begin{equation}
  f(x_k) \leq f_{L} + \tau \left(f(x_0) - f_L\right),  
\end{equation}
where $f_L$ is the smallest objective function values attained by any algorithm on the problem.

\begin{table}[ht!]
\def\ROWCOLOR{black!10!white}
\centering
    \begin{tabular}{l c c c c c c c c}
    \toprule
       Algorithm & $b$ & step size & $s_0\dagger$ & $\epsilon$ & $\theta$ &  $\sigma_0$ & $\sigma_{\min}^{\star}$ & samp. radius \\
    \midrule
    \rowcolor{\ROWCOLOR}
    {\tt DFQRM} & - & - & - & $10^{-5}$ & $0\ddagger$ & $1$ & $0.01$  & - \\
    {\tt ZORO} & $1$ & $\frac{0.005}{s_0\log n}$ & $0.1n$ & - & - & - & - & $5\times 10^{-4}$ \\
    \rowcolor{\ROWCOLOR}
    {\tt adaZORO} & $0.5$ & $\frac{0.005}{s_0\log n}$ & $0.1n$ & - & - & - & - & $5\times 10^{-4}$ \\
    {\tt ZORO-FA} & $1$ & - & $0.1n$ & $0.01$ & $0.25$ & $\frac{1}{s_0\log n}$ & - & - \\
    \rowcolor{\ROWCOLOR}
    {\tt Nelder-Mead} & - & - & - & - & - & - & $0.001$ & -\\
    \bottomrule
    \end{tabular}
    \caption{Parameters used in the Mor\'{e}-Garbow-Hillstrom benchmarking experiments of Section~\ref{sec:MGH}. $\dagger$: Here $s_0$ refers to the target sparsity in {\tt ZORO}, and the initial target sparsity of {\tt adaZORO} and {\tt ZORO-FA} (which both dynamically adjust this parameter). $n$ refers to the problem size. $\ddagger$: In {\tt DFQRM}, $\theta$ has a slightly different meaning. $\star$: In {\tt Nelder-Mead} this is the minimum simplex size.}
    \label{table: params_MGH_benchmarking}
\end{table}

\paragraph{Results.} The results are shown in Figure~\ref{fig:MGH-data-profile}. We observe that {\tt ZORO-FA} is quickly able to solve a subset of MGH problems, as shown by the left-hand side of each data profile. We found that {\tt adaZORO} and {\tt ZORO} struggled in all settings. We suspect this is because (i) Only some MGH test functions exhibit gradient sparsity, and (ii) The Lipschitz constants, and consequently the ideal step sizes, vary greatly among MGH functions. Thus, algorithms which are able to adaptively select their step size have an innate advantage.  

\begin{figure}[h!]
    \centering
\includegraphics[width=0.32\textwidth]{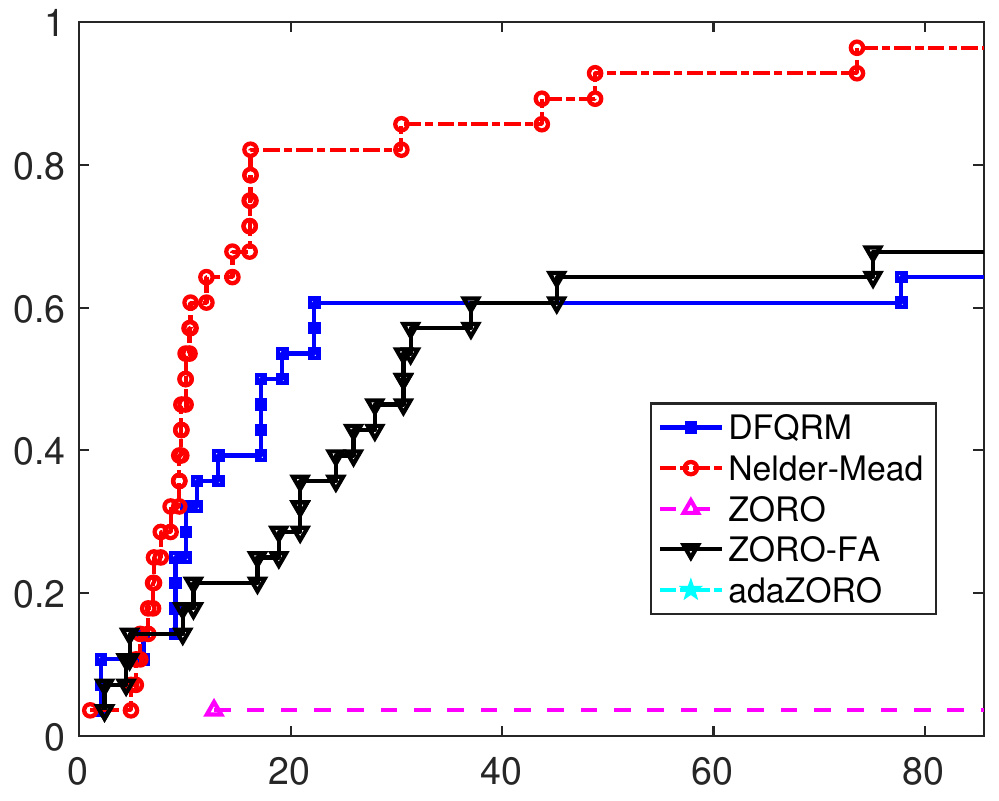} \hfill
\includegraphics[width=0.32\textwidth]{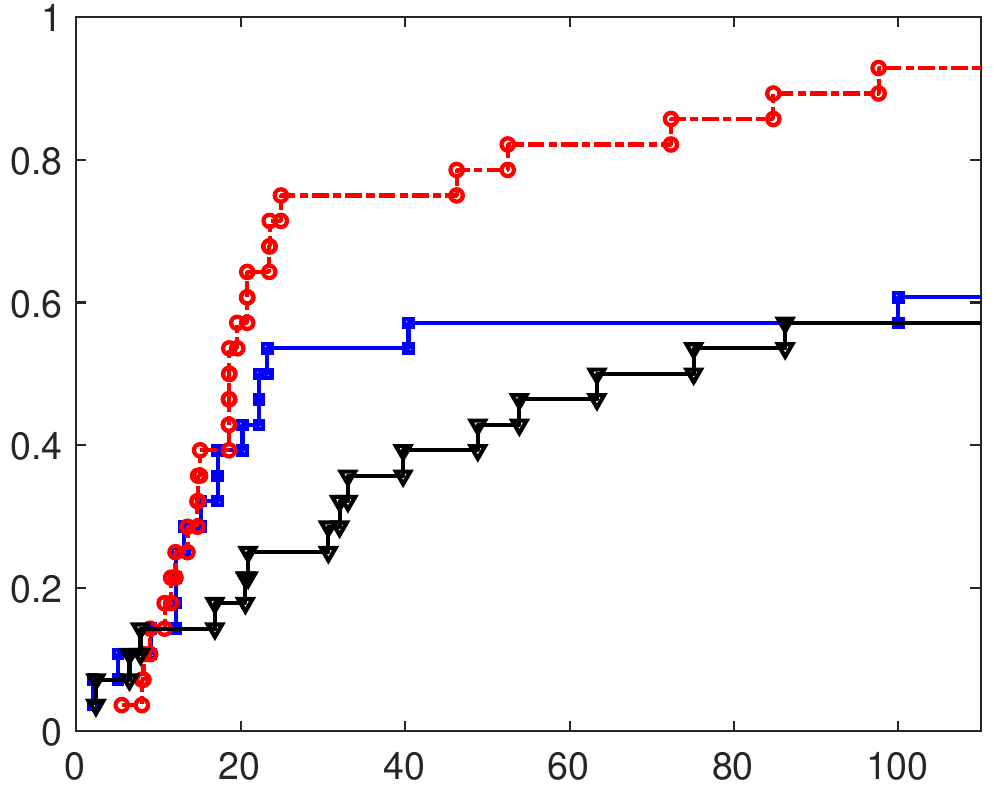} \hfill
\includegraphics[width=0.32\textwidth]{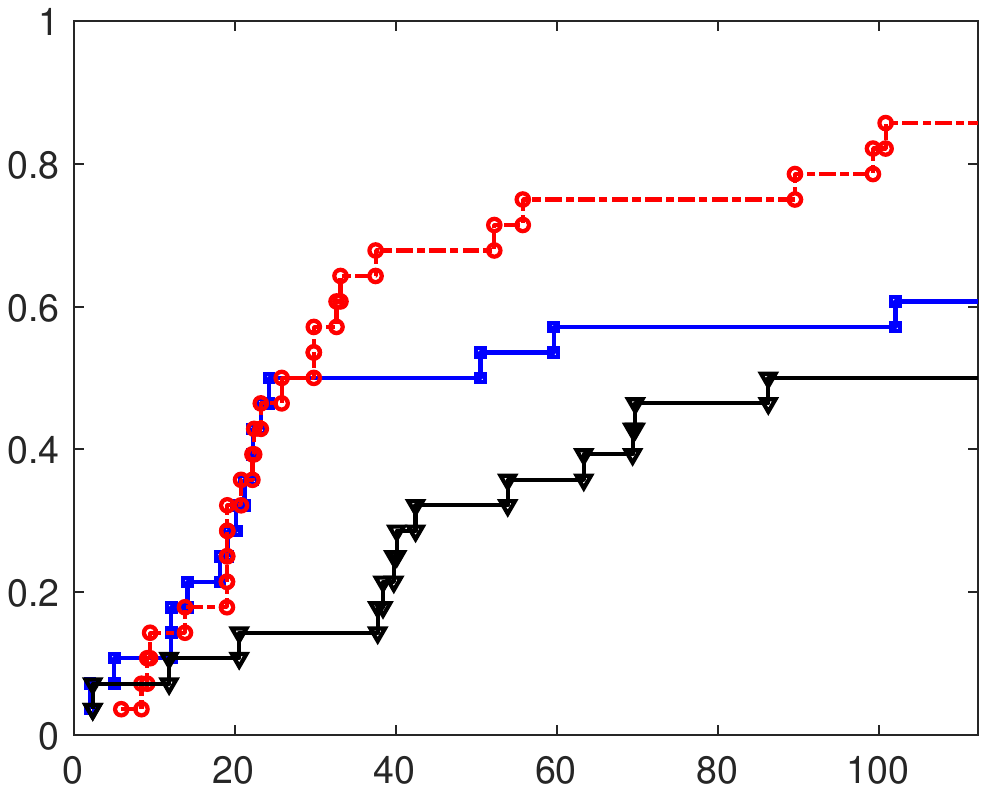} \\
 \includegraphics[width=0.32\textwidth]{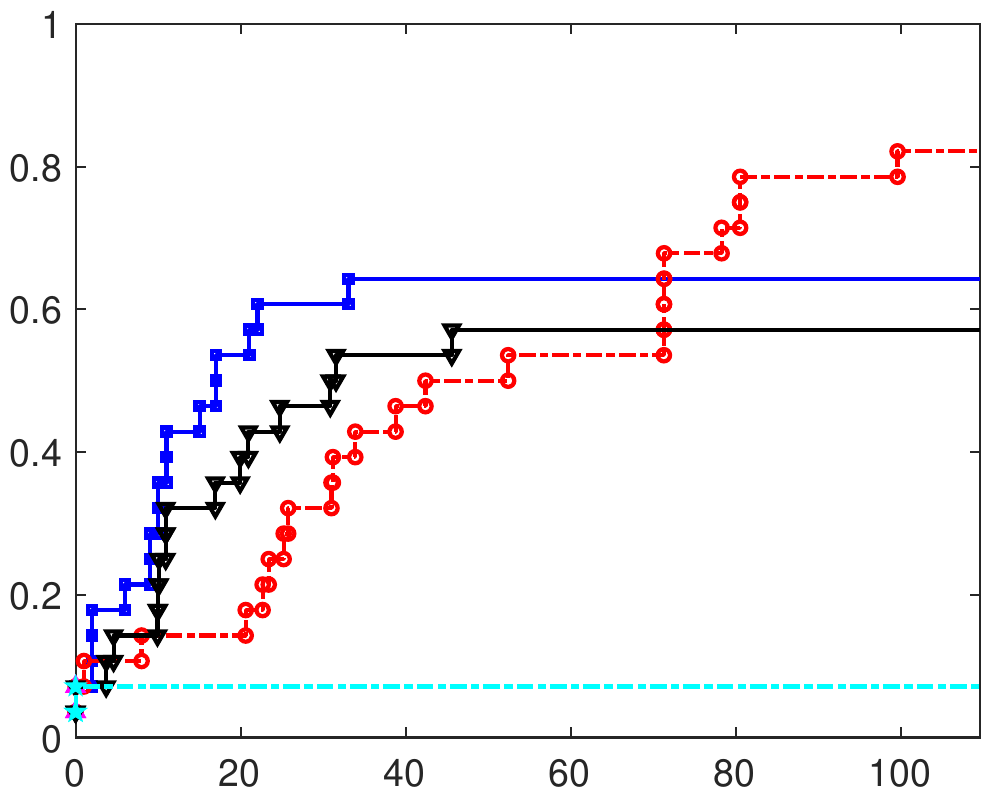} \hfill
 \includegraphics[width=0.32\textwidth]{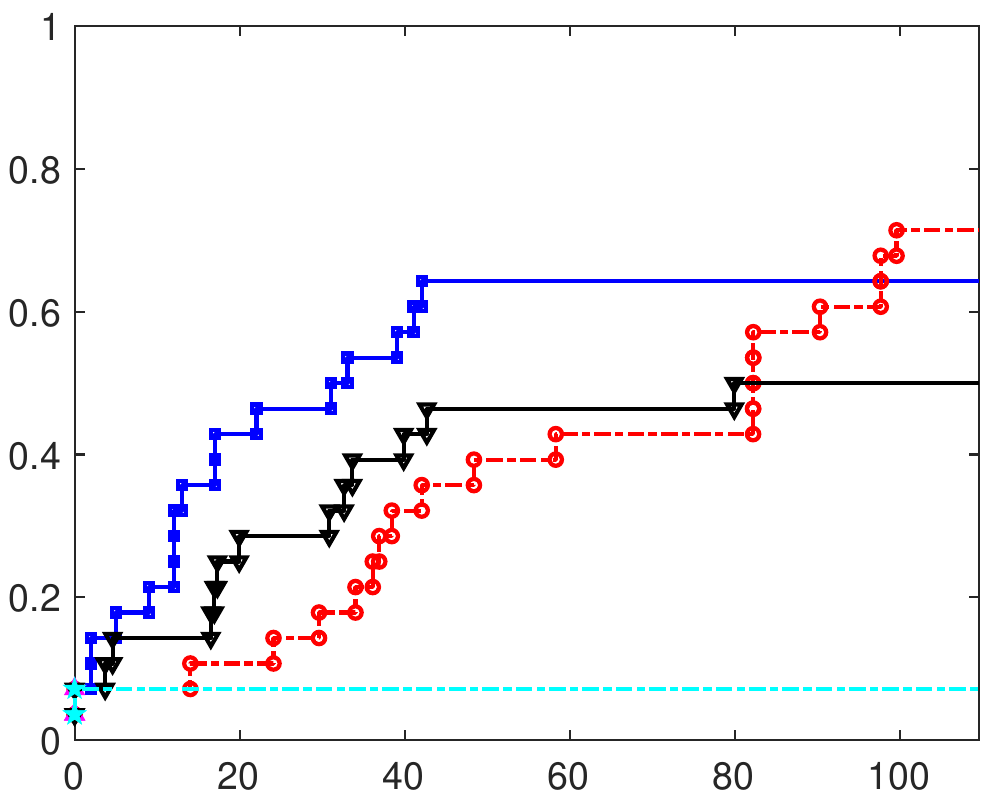} \hfill
 \includegraphics[width=0.32\textwidth]{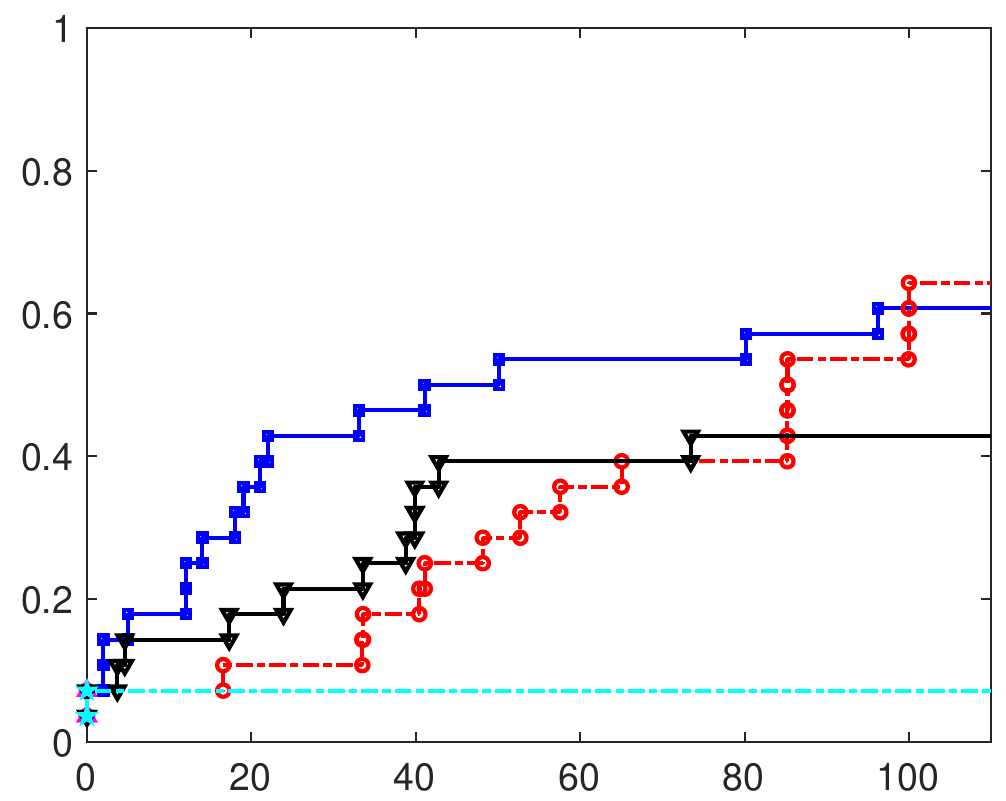}
 \includegraphics[width=0.32\textwidth]{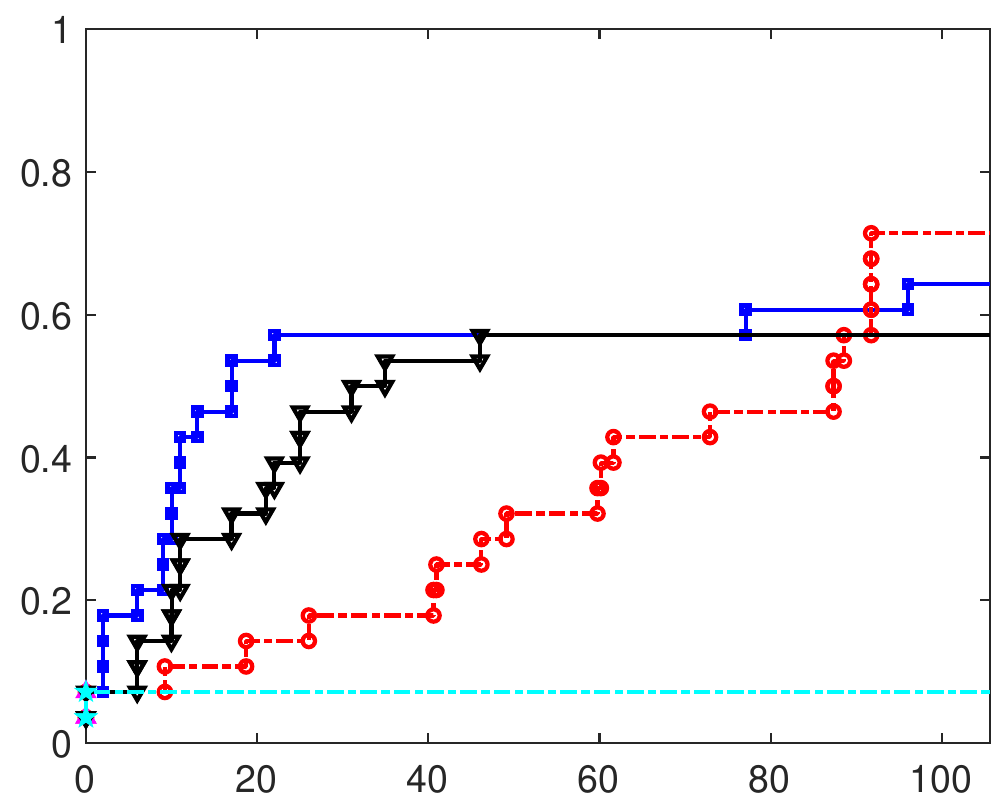} \hfill
 \includegraphics[width=0.32\textwidth]{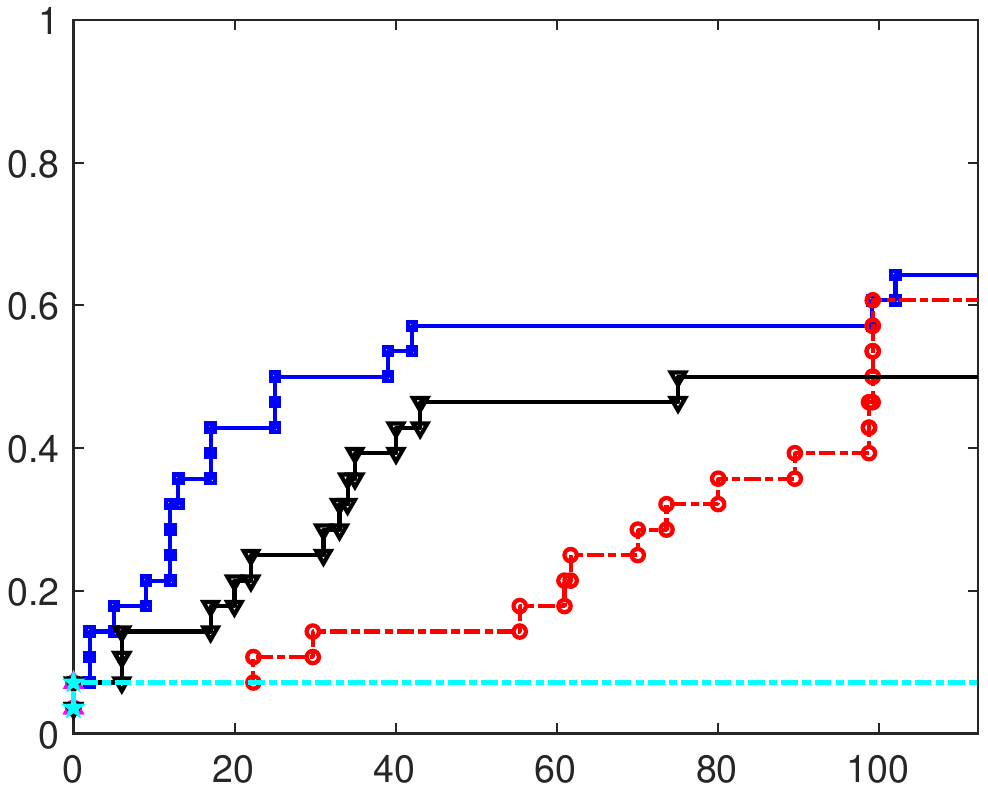} \hfill
 \includegraphics[width=0.32\textwidth]{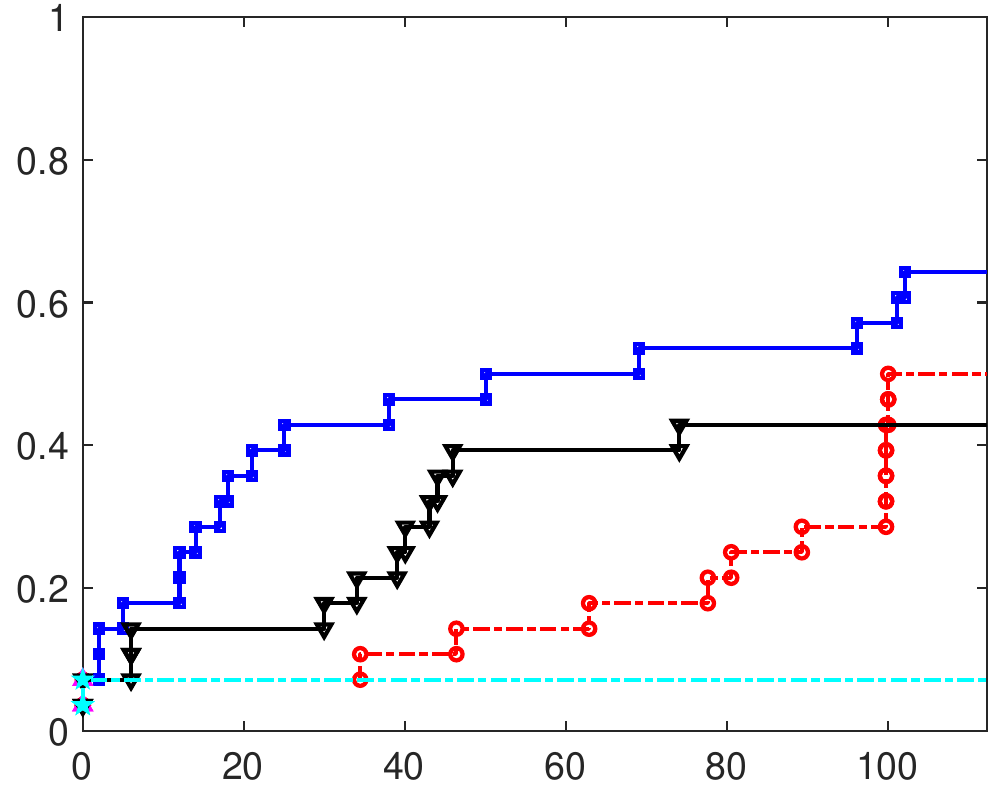}
    \caption{Data profiles for various tolerances ($\tau$) and problem dimensions ($n$).  {\bf Top Row:} $n=100$. {\bf Middle Row:} $n=500$. {\bf Bottom Row:} $n=1000$. {\bf Left Column:} $\tau = 10^{-1}$. {\bf Middle Column:} $\tau = 10^{-2}$. {\bf Right Column:} $\tau = 10^{-3}$. In the top row, plots for {\tt ZORO} and {\tt adaZORO} are absent as they did not solve any problems to the required tolerance.}%We observe that the performance of {\tt ZORO-FA} is robust, and competitive with {\tt DFQRM} and {\tt Nelder-Mead}, even in the absence of obvious gradient sparsity.}
    \label{fig:MGH-data-profile}
\end{figure}

\subsection{Discussion}

\textcolor{black}{On functions exhibiting gradient sparsity
{\tt ZORO-FA} outperforms existing benchmarks. Even though {\tt ZORO} and {\tt adaZORO} are given the exact gradient sparsity parameter, the fact that they cannot decrease their step-size or sampling radius means that their progress eventually plateaus.} For the MGH test functions {\tt ZORO} and {\tt adaZORO} struggle, % --- presumably as they are unable to adjust their step sizes --- 
while {\tt Nelder-Mead}, {\tt DFQRM}, and {\tt ZORO-FA} perform well. It was surprising, at least to us, that {\tt Nelder-Mead} performed so well. We note however that this performance came at high computational expense, with {\tt Nelder-Mead} typically taking 10--100 times longer to run.

\subsection{Hidden Gradient Compressibility}
In contrast to Section~\ref{sec:Sparse_Benchmarking}, the MGH functions of Section~\ref{sec:MGH} {\em are not} explicitly designed to possess sparse or compressible gradients. Nonetheless we observe that {\tt ZORO-FA} works well, and is frequently able to make progress with $s_k \ll n$. To investigate this further, we plot the magnitudes of the entries of $\nabla f(x)$, sorted from largest to smallest, for two MGH test functions (with $n=500$) at $20$ randomly selected $x$. Letting $|\nabla f(x)|_{(j)}$ denote the $j$-th largest-in-magnitude component of $\nabla f(x)$, we plot the mean of $|\nabla f(x)|_{(j)}$ over all $20$ values of $x$, as well as the minimum and maximum values of $|\nabla f(x)|_{(j)}$ observed.

\textcolor{black}{We also plot a trajectory of {\tt ZORO-FA} applied to these two functions, as well as the sparsity levels found at each iteration ($s_k$). The results are displayed in Figure~\ref{fig:MGH-gradient-compressibility}. As is clear, some MGH test functions (e.g., {\tt rosex}) exhibit extreme gradient compressibility, while others (e.g., {\tt trig}) do not. As suggested above, {\tt ZORO-FA} can indeed make progress using $s_k$ smaller than $n$. It is interesting that {\tt ZORO-FA} uses lower values of $s_k$ for the function ({\tt trig}) whose gradients appear less compressible. We suspect this might be due to the interplay of the sparsity and Lipschitz continuity of the gradients. As {\tt ZORO-FA} searches for a suitable sparsity level ($s_j$) and Lipschitz constant ($\sigma_j$) simultaneously, it might `miss out' on gradient sparsity when the Lipschitz constant is large. This further emphasizes the need for further research into optimization methods which can take advantage of the presence of gradient sparsity, yet are robust to its absence.}  

\begin{figure}[h]
    \centering
    \includegraphics[width=0.46\textwidth]{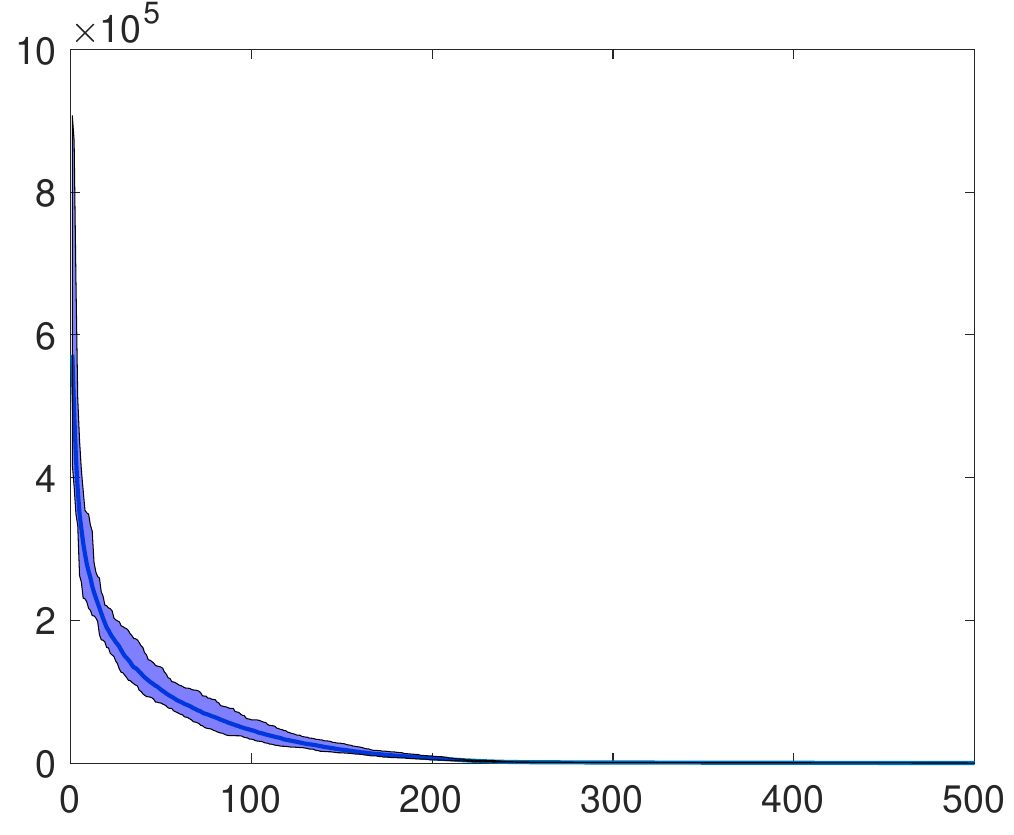} \hfill
    \includegraphics[width=0.46\textwidth]{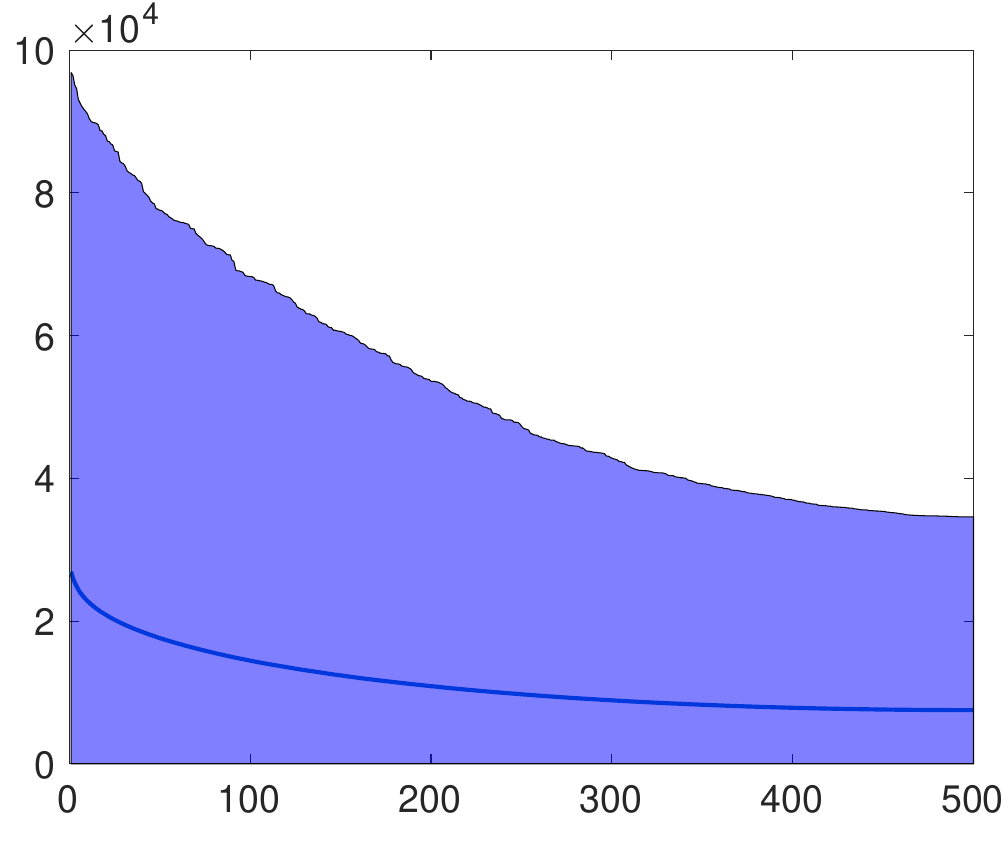} \\
    \includegraphics[width=0.46\textwidth]{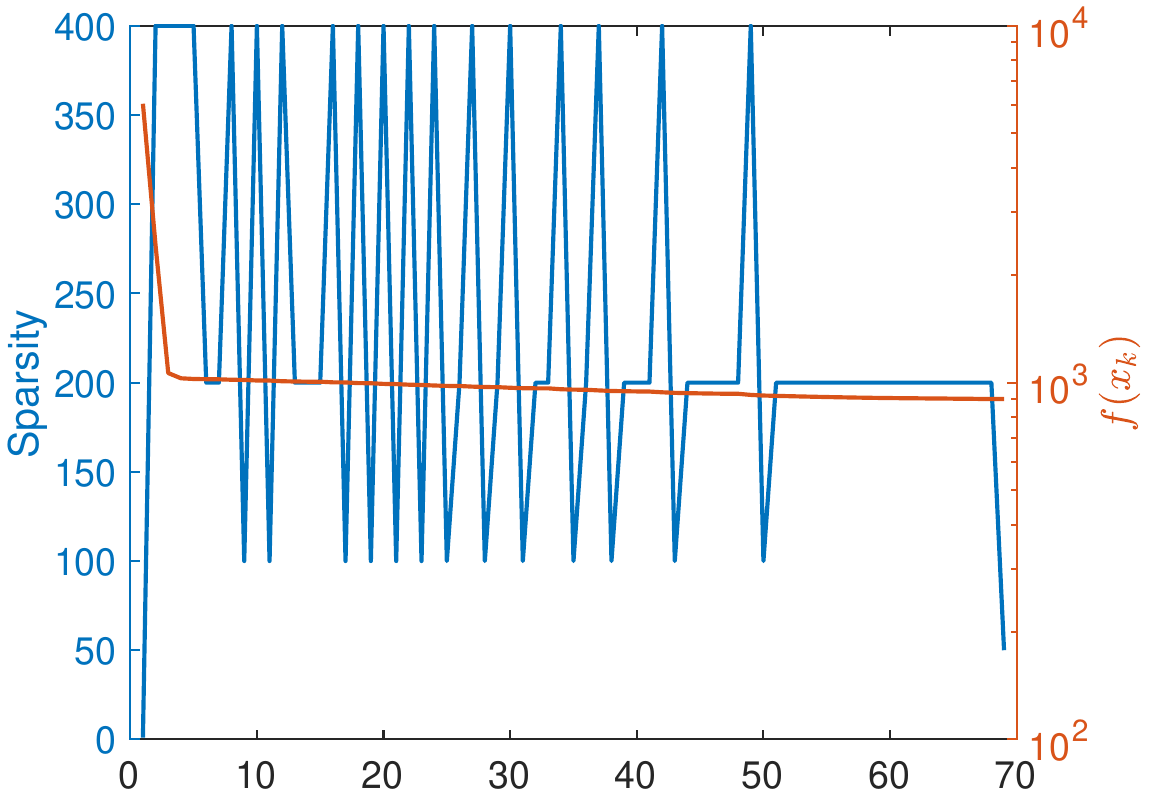} \hfill
    \includegraphics[width=0.46\textwidth]{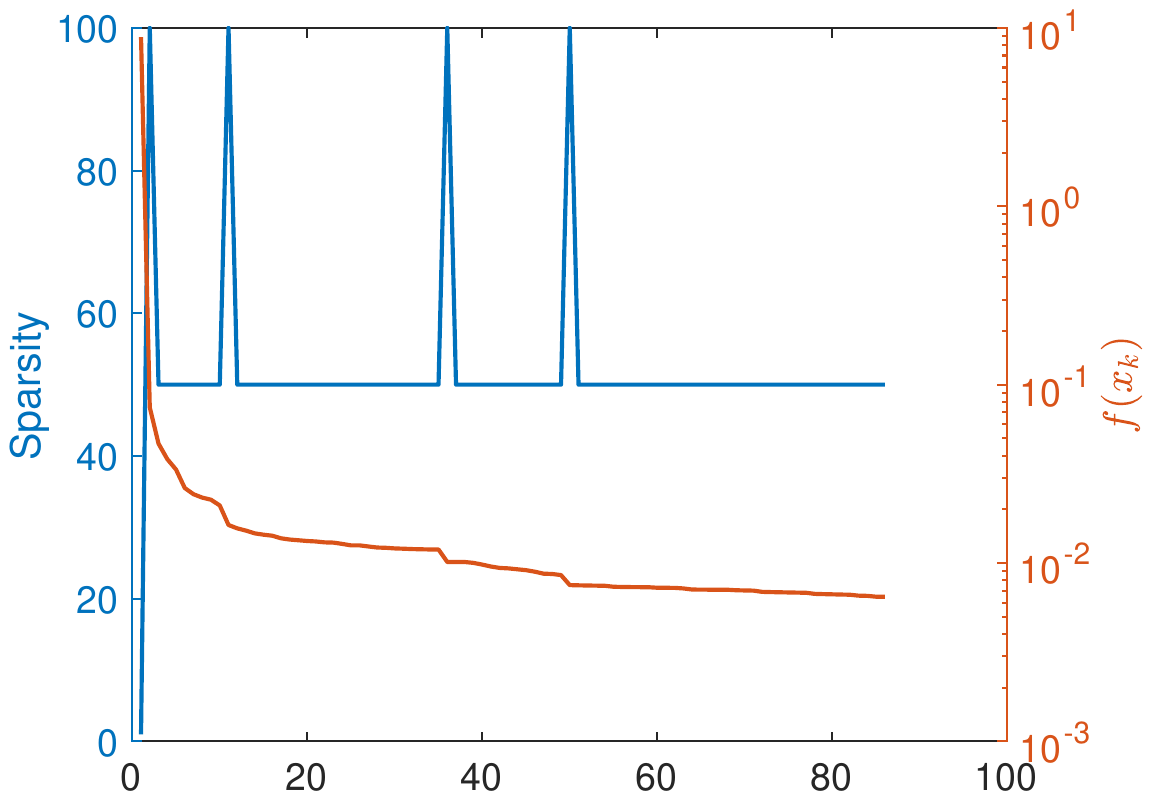}
    \caption{{\bf Top row:} Gradient magnitude profiles for two MGH functions: {\tt rosex} ({\bf Left}), and {\tt trig} ({\bf Right}). The solid line represents the mean (over 20 trials), while the shading represents the minimum and maximum values of gradient component magnitudes observed. {\bf Bottom row:} The successful sparsity level for {\tt ZORO-FA} (left axes) as well as the objective function values (right axes) for {\tt rosex} ({\bf Left}), and {\tt trig} ({\bf Right}).}
    \label{fig:MGH-gradient-compressibility}
\end{figure}

\section{Conclusion}

In this paper, we presented a fully adaptive variant of {\tt ZORO}, originally proposed in \cite{cai2022zeroth}, for derivative-free minimization of a smooth functions with $n$ variables and $L$-Lipschitz continuous gradient. The new method, called {\tt ZORO-FA}, does not require the knowledge of the Lipschitz constant $L$ or the effective sparsity level (when the gradients are compressible). At each iteration, {\tt ZORO-FA} attempts to exploit a possible compressibility of gradient to compute a suitable gradient approximation \textit{using fewer than} $\mathcal{O}\left(n\right)$ function evaluations. When the corresponding trial points fail to ensure a functional decrease of $\mathcal{O}\left(\epsilon^{2}\right)$, the method computes gradient approximations via forward finite-differences, which guarantee that the trial points will yield the desired functional decrease. With this safeguard procedure, we proved that, with probability one, {\tt ZORO-FA} needs no more than $\mathcal{O}\left(n^2\epsilon^{-2}\right)$ function evaluations to find an $\epsilon$-approximate stationary point of the objective function. When the gradient vectors of the objective function are $p$-compressible for some $p\in (0,1)$, we proved that, with high probability, {\tt ZORO-FA} has an improved worst-case complexity of $\mathcal{O}\left(s(\theta,p)\ln(n)\epsilon^{-2}\right)$ function evaluations, where $\theta\in (0,1/2)$ is a user-defined parameter that controls the relative error of the gradient approximations, and $s(\theta,p)$ is the effective sparsity level as defined in Corollary~\ref{cor:2.1}.
% which is defined as
% \begin{equation*}
% s(\theta,p)=\left[\frac{4}{\theta}\left(\frac{13.2}{\left(\frac{2}{p}-1\right)^{\frac{1}{2}}}+\frac{11}{\left(\frac{1}{p}-1\right)}\right)\right]^{\frac{2p}{2-p}}.
% \end{equation*}
Our preliminary numerical results indicate that {\tt ZORO-FA} is able to exploit the presence of gradient compressibility, while being robust towards its absence.

\section*{Acknowledgements} 
\textcolor{black}{We are very grateful to the associate editor and the two anonymous reviewers for their valuable comments and suggestions, which helped us improve this work.}

\section*{Declarations}
\begin{itemize}
\item Conflict of interest: The authors have no conflicts of interest to declare that are relevant to the content of this article.
\item Data availability: The Mor\'{e}-Garbow-Hillstrom benchmarking functions used in the experiments of Section~\ref{sec:MGH} can be downloaded at \url{https://arnold-neumaier.at/glopt/test.html}
\end{itemize}

\end{document}